\documentclass{article}

\usepackage[a4paper]{geometry}
\usepackage[english,strings]{babel}
\usepackage[utf8]{inputenc}
\usepackage[T1]{fontenc}
\usepackage{amsmath}
\usepackage{amssymb}
\usepackage{amsthm}
\usepackage{amscd}
\usepackage{tikz}
\usetikzlibrary{cd}
\usepackage[all,cmtip]{xy}
\usepackage{hyperref}
\usepackage{enumitem}
\usepackage{mathrsfs}
\usepackage{mathtools}
\usepackage{authblk}

\geometry{bindingoffset=0cm}
\geometry{hcentering=true}
\geometry{hscale=0.8}
\geometry{vscale=0.8}
\geometry{headheight=14pt}

\theoremstyle{definition}
\newtheorem{theorem}			     {Theorem}		[section]
\newtheorem{proposition}  [theorem]	 {Proposition}

\newtheorem{remark} 	  [theorem]  {Remark}
\newtheorem{example}	  [theorem]  {Example}

\title{Galois theory and homology in quasi-abelian functor categories}

\author{Nadja Egner\footnote{The author’s research is funded by a FNRS doctoral grant of the Communauté française de Belgique.
}}

\affil{\small\textit{Institut de Recherche en Math\'ematique et Physique, Universit\'e catholique de Louvain, Chemin du Cyclotron 2, 1348 Louvain-la-Neuve, Belgium}}

\affil{nadja.egner@uclouvain.be}

\date{\today}

\begin{document}
	
\maketitle

\begin{abstract}
	Given a finite category $ \mathbb{T} $, we consider the functor category $ \mathscr{A}^\mathbb{T} $, where $ \mathscr{A} $ can in particular be any quasi-abelian category. Examples of quasi-abelian categories are given by any abelian category but also by non-exact additive categories as the categories of torsion(-free) abelian groups, topological abelian groups, locally compact abelian groups, Banach spaces and Fr\'{e}chet spaces. In this situation, the categories of various internal categorical structures in $ \mathscr{A} $, such as the categories of internal $ n $-fold groupoids, are equivalent to functor categories $ \mathscr{A}^\mathbb{T} $ for a suitable category $ \mathbb{T} $. For a replete full subcategory $ \mathbb{S} $ of $ \mathbb{T} $, we define $ \mathscr{F} $ to be the full subcategory of $ \mathscr{A}^\mathbb{T} $ whose objects are given by the functors $ F:\mathbb{T}\to\mathscr{A} $ with $ F(T)=0 $ for all $ T\notin\mathbb{S} $. We prove that $ \mathscr{F} $ is a torsion-free Birkhoff subcategory of $ \mathscr{A}^\mathbb{T} $. This allows us to study (higher) central extensions from categorical Galois theory in $ \mathscr{A}^\mathbb{T} $ with respect to $ \mathscr{F} $ and generalized Hopf formulae for homology.  
	
	\phantom{x}
	
	\small\textit{Keywords}: (double) central extension, Hopf formula for homology, torsion theory, quasi-abelian category, functor category, internal groupoid, Birkhoff subcategory 
	
	\small\textit{2020 Mathematics Subject Classification}: 18E50, 18G50, 18E40, 18E05, 18E10, 18D40
\end{abstract}

\section*{Introduction}

Categorical Galois theory, as developed in \cite{janelidze:1990, janelidze:1991}, not only generalizes classical Galois theory but also establishes a link to the theory of central extensions of groups. These are surjective group homomorphisms $ f:A\to B $ whose kernel is contained in the center of $ A $. Given an admissible Galois structure $ \Gamma=(\mathscr{C},\mathscr{F},\mathsf{F},\mathsf{U},\mathscr{E},\mathscr{Z}) $, where 
\begin{equation*}
	\begin{tikzcd}
		\mathscr{C} \arrow[r, white, shift right=2, "\bot" black] \arrow[r, shift left=2, "\mathsf{F}"] &\mathscr{F} \arrow[l, shift left=2, "\mathsf{U}"]
	\end{tikzcd}
\end{equation*}
is an adjunction, and $ \mathscr{E} $ and $ \mathscr{Z} $ are classes of morphisms in $ \mathscr{C} $ and $ \mathscr{F} $, respectively, satisfying certain conditions, the notions of (trivial) coverings are introduced. In \cite{janelidze.kelly:1994}, the authors consider the situation where $ \mathscr{C} $ is an exact category, $ \mathscr{F} $ is a Birkhoff subcategory of $ \mathscr{C} $, and $ \mathscr{E} $ and $ \mathscr{Z} $ are the classes of regular epimorphisms in $ \mathscr{C} $ and $ \mathscr{F} $, respectively. In this case, (trivial) coverings are called (trivial) central extensions. In the case where $ \mathsf{F}:=\mathsf{Ab} $ is the abelianization functor from the category $ \mathscr{C}:=\mathrm{Grp} $ of groups to its subcategory $ \mathscr{F}:=\mathrm{Ab} $ of abelian groups, this general notion of central extension exactly recovers the notion of central extension from group theory. 

In \cite{gran:2001}, a characterization of the trivial and central extensions in the category $ \mathrm{Grpd}(\mathscr{C}) $ of internal groupoids in an exact Mal'tsev category $ \mathscr{C} $ with respect to the Birkhoff subcategory $ \mathrm{Discr}(\mathscr{C}) $ of discrete internal groupoids is given. Namely, an extension $ (f_0,f_1) $
\begin{equation*}
	\begin{tikzcd}
		C_1 \arrow[d, "f_1"'] \arrow[r, shift left=1, "d"] \arrow[r, shift right=1, "c"'] &C_0 \arrow[d, "f_0"]\\
		D_1 \arrow[r, shift left=1, "d"] \arrow[r, shift right=1, "c"'] &D_0
	\end{tikzcd}
\end{equation*}
between internal groupoids $ \mathbb{C} $ and $ \mathbb{D} $, meaning that $ f_0 $ and $ f_1 $ are regular epimorphisms in $ \mathscr{C} $, is central if and only if it is a discrete fibration, i.e., either of the above commutative squares is a pullback. If $ \mathscr{C} $ is protomodular, this is equivalent to the condition that the induced morphism between the kernels $ \mathrm{ker}(d) $ of the "domain" morphisms $ d $ of $ \mathbb{C} $ and $ \mathbb{D} $ is an isomorphism. In \cite{everaert.gran:2010}, $ \mathscr{C} $ is assumed to be semi-abelian in the sense of \cite{janelidze.marki.tholen:2002} and the (higher) central extensions in the category of $ n $-fold internal groupoids $ \mathrm{Grpd}^n(\mathscr{C}) $ with respect to $ \mathrm{Discr}(\mathscr{C}) $ are explicitly described. Moreover, generalized Hopf formulae are given. 

If $ \mathscr{C} $ is semi-abelian, the category $ \mathrm{Grpd}(\mathscr{C}) $ is equivalent to the category $ \mathrm{XMod}(\mathscr{C}) $ of internal crossed modules in $ \mathscr{C} $ \cite{janelidze:2003}. An internal crossed module is given by a morphism $ f:X\to B $ and an action $ \xi:B\flat X\to X $ of $ B $ on $ X $ satisfying the so-called equivariance and Peiffer conditions. Given an internal groupoid $ \mathbb{C} $ as above, the morphism part of the corresponding internal crossed module is given by $ c\circ \mathrm{ker}(d):\mathrm{Ker}(d)\to C_0 $. If $ \mathscr{C} $ is additive and has kernels, $ \mathrm{Grpd}(\mathscr{C}) $ is equivalent to the category $ \mathrm{RG}(\mathscr{C}) $ of internal reflexive graphs in $ \mathscr{C} $, and $ \mathrm{XMod}(\mathscr{C}) $ is equivalent to the arrow category $ \mathrm{Arr}(\mathscr{C}) $ of $ \mathscr{C} $. Consequently the categories $ \mathrm{Grpd}^n(\mathscr{C}) $ and $ \mathrm{XMod}^n(\mathscr{C}) $ are equivalent to the categories $ \mathrm{RG}^n(\mathscr{C}) $ and $ \mathrm{Arr}^n(\mathscr{C}) $, respectively. 

Restricting ourselves to the quasi-abelian setting \cite{yoneda:1960, schneiders:1999}, we characterize the central extensions in a general situation that includes the ones considered in $ \cite{gran:2001} $ and $ \cite{everaert.gran:2010} $. More specifically, we consider a finite category $ \mathbb{T} $ and a replete full subcategory $ \mathbb{S} $ of $ \mathbb{T} $. We show that the functor category $ \mathscr{A}^\mathbb{T} $ admits a torsion theory whose torsion-free subcategory is the full subcategory $ \mathscr{F} $ of $ \mathscr{A}^\mathbb{T} $ whose objects are given by the functors $ F:\mathbb{T}\to\mathscr{A} $ with $ F(T)=0 $ for all $ T\notin\mathbb{S} $. This implies that $ (\mathscr{A}^\mathbb{T},\mathscr{F}, \mathsf{F},\mathsf{U},\mathscr{E},\mathscr{Z}) $ yields an \textit{admissible} Galois structure \cite{janelidze:1990}, where $ \mathsf{F}:\mathscr{A}^\mathbb{T}\to\mathscr{F} $ is the reflection and $ \mathsf{U}:\mathscr{F}\to\mathscr{A}^\mathbb{T} $ is the inclusion of $ \mathscr{F} $, and $ \mathscr{E} $ and $ \mathscr{Z} $ are the classes of regular epimorphisms in $ \mathscr{A}^\mathbb{T} $ and $ \mathscr{F} $, respectively. Furthermore, $ \mathscr{F} $ is a Birkhoff subcategory of $ \mathscr{A}^\mathbb{T} $. We show that a regular epimorphism $ \alpha:F\to G $ in $ \mathscr{A}^\mathbb{T} $ is a central extension if and only if its component $ \alpha_T:F(T)\to G(T) $ is an isomorphism whenever $ T\notin\mathbb{S} $. We also study higher central extensions which are linked to generalized Hopf formulae for homology \cite{everaert.gran.vanderlinden:2008, duckerts-antoine:2017}. 

The article consists of three sections each of which comprises a part which recalls the theoretical background, and a part in which we study $ \mathscr{A}^\mathbb{T} $ and $ \mathscr{F} $ under different perspectives. Section~\ref{subsec:torsiontheories} recalls the notion of torsion theory in a pointed category. In Section~\ref{subsec:torsiontheoryTF}, we show that $ \mathscr{F} $ is a torsion-free subcategory of $ \mathscr{A}^\mathbb{T} $ whenever $ \mathscr{A} $ is quasi-abelian. In Section~\ref{subsec:categoricalGaloistheory}, we recall the notions of admissible Galois structure, and of trivial, normal and central extensions. In Section~\ref{subsec:categoricaltheoryofcentralextensions}, we recall how categorical Galois theory is used to develop a categorical theory of central extensions. In Section~\ref{subsec:centralextensionsinA^T}, we give an explicit description of the trivial extensions in $ \mathscr{A}^\mathbb{T} $ with respect to $ \mathscr{F} $ when $ \mathscr{A} $ is abelian, and of the central extensions when $ \mathscr{A} $ is quasi-abelian. We consider two concrete examples, where $ \mathbb{T} $ and $ \mathbb{S} $ are chosen in such a way that the pair $ (\mathscr{A}^\mathbb{T},\mathscr{F}) $ corresponds to $ (\mathrm{Arr}(\mathscr{A}),\mathscr{A}) $ and $ (\mathrm{Arr}^2(\mathscr{A}),2\textrm{-Arr}(\mathscr{A})) $, respectively. In Section~\ref{subsec:highercentralextensions}, we recall the generalization of the categorical theory of central extensions to higher extensions and then characterize those in $ \mathscr{A}^\mathbb{T} $ in Section~\ref{subsec:highercentralextensionsinA^T}.

\textsc{Acknowledgements.} The author warmly thanks George Janelidze for the fruitful discussions during her stay at the University of Cape Town in spring 2023.  

\section{Torsion theory $ (\mathscr{T},\mathscr{F}) $ in $ \mathscr{A}^\mathbb{T} $}\label{sec:torsiontheoryTF}

In this section, we will show that the functor category $ \mathscr{A}^\mathbb{T} $, where $ \mathbb{T} $ is a finite category and $ \mathscr{A} $ is a quasi-abelian category, allows for many torsion theories $ (\mathscr{T},\mathscr{F}) $. Namely, for any  replete full subcategory $ \mathbb{S} $ of $ \mathbb{T} $, the full subcategory $ \mathscr{F} $ of $ \mathscr{A}^\mathbb{T} $, whose objects are given by the functors $ F:\mathbb{T}\to\mathscr{A} $ such that $ F(T)=0 $ as soon as $ T\notin\mathbb{S} $, is torsion-free. We start with recalling some basic facts about torsion theories. 

\subsection{Torsion theories}\label{subsec:torsiontheories}

A \textbf{torsion theory} $ (\mathscr{T},\mathscr{F}) $ in a pointed category $ \mathscr{C} $ consists of two replete full subcategories $ \mathscr{T} $ and $ \mathscr{F} $ of $ \mathscr{C} $ such that the following conditions hold: 
\begin{enumerate}
	\item Any morphism $ f: T\to F $ in $ \mathscr{C} $ from an object $ T\in\mathscr{T} $ to an object $ F\in\mathscr{F} $ is $ 0 $.  
	\item For any object $ C\in\mathscr{C} $, there exists a short exact sequence 
	\begin{equation}\label{eq:sesTT}
		\begin{tikzcd}
			0\arrow[r] &T\arrow[r,"\varepsilon_C"] &C \arrow[r,"\eta_C"] &F\arrow[r] &0,
		\end{tikzcd}
	\end{equation}
	where $ T\in\mathscr{T} $ and $ F\in\mathscr{F} $. 
\end{enumerate}

It is easy to see that the short exact sequence \eqref{eq:sesTT} is unique up to unique isomorphism, i.e., given another such short exact sequence, there exist unique isomorphisms $ \varphi $ and $ \psi $ such that the following diagram commutes: 
\begin{equation*}
	\begin{tikzcd}
		0\arrow[r] &T\arrow[d, dotted, "\varphi"']\arrow[r,"\varepsilon_C"] &C \arrow[d, equal] \arrow[r,"\eta_C"] &F\arrow[d, dotted, "\psi"]\arrow[r] &0\\
		0\arrow[r] &T'\arrow[r,"\varepsilon'_C"'] &C \arrow[r,"\eta'_C"'] &F\arrow[r] &0
	\end{tikzcd}
\end{equation*}

Furthermore, one can show that $ \mathscr{T} $ is a normal mono-coreflective subcategory of $ \mathscr{C} $ whose coreflection $ \mathsf{T}:\mathscr{C}\to\mathscr{T} $ maps an object $ C $ to $ T $ as in \eqref{eq:sesTT}. The $ C $-component of the corresponding counit is given by $ \varepsilon_C $. Analogously, $ \mathscr{F} $ is a normal epi-reflective subcategory of $ \mathscr{C} $ whose reflection $ \mathsf{F}:\mathscr{C}\to\mathscr{F} $ maps an object $ C $ to $ F $ as in \eqref{eq:sesTT}. The $ C $-component of the corresponding unit is given by $ \eta_C $. Moreover, $ \mathsf{F} $ is semi-left exact \cite{cassidy.hebert.kelly:1985}, i.e., it preserves all pullbacks of the form
\begin{equation*}
	\begin{tikzcd}
		B\times_{\mathsf{F}(B)} X 
		\arrow[r, "p_2"]
		\arrow[d, "p_1"']
		& X
		\arrow[d, "\varphi"]\\
		B \arrow[r, "\eta_B"']
		&\mathsf{F}(B),
	\end{tikzcd}
\end{equation*}
where $ X $ lies in $ \mathscr{F} $ and $ \varphi $ is an arbitrary morphism. Here we omitted the inclusion functor $ \mathsf{U}:\mathscr{F}\to\mathscr{C} $. The following characterization of torsion-free subcategories can be found in \cite{janelidze.tholen:2007}. 

\begin{proposition}\label{prop:characterizationoftorsion-freesubcategories}
	Let $ \mathscr{C} $ be a pointed category with kernels and pullback stable normal epimorphisms, and $ \mathscr{F} $ be a replete full subcategory of $ \mathscr{C} $. Then the following conditions are equivalent: 
	\begin{enumerate}
		\item \label{it:torsion-freesubcategory} $ \mathscr{F} $ is a torsion-free subcategory of $ \mathscr{C} $. 
		\item \label{it:semi-left-exactreflection} $ \mathscr{F} $ is a normal epi-reflective subcategory of $ \mathscr{C} $ and the reflection $ \mathsf{F}:\mathscr{C}\to\mathscr{F} $ is semi-left exact. 
	\end{enumerate}
\end{proposition}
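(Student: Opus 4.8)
The plan is to establish the two implications separately. For \ref{it:torsion-freesubcategory}$\Rightarrow$\ref{it:semi-left-exactreflection}, the facts recalled just before the statement already make $\mathscr{F}$ a normal epi-reflective subcategory, with reflection sending $C$ to the object $F$ of \eqref{eq:sesTT} and unit $\eta_C$; so the only thing left to prove is that $\mathsf{F}$ is semi-left exact. Given an arbitrary $\varphi\colon X\to\mathsf{F}(B)$ with $X\in\mathscr{F}$, I would form the pullback $P=B\times_{\mathsf{F}(B)}X$ and observe that, since $\eta_B$ is a normal epimorphism and normal epimorphisms are pullback stable by hypothesis, the projection $p_2\colon P\to X$ is again a normal epimorphism; moreover $\ker(p_2)\cong\ker(\eta_B)=\mathsf{T}(B)$, because kernels are computed as pullbacks along $0$ and pullbacks paste. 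Hence $0\to\mathsf{T}(B)\to P\xrightarrow{p_2}X\to 0$ is short exact with torsion kernel and torsion-free cokernel. By the uniqueness up to isomorphism of the decomposition \eqref{eq:sesTT}, this must be the canonical sequence of $P$, so $\mathsf{F}(P)\cong X$ with $\eta_P\cong p_2$. Applying $\mathsf{F}$ to the defining square then yields a commutative square whose horizontal edges $\mathsf{F}(p_2)$ and $\mathsf{F}(\eta_B)$ are isomorphisms (the latter because the reflection is idempotent), and such a square is automatically a pullback; this is exactly semi-left exactness.

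For the converse, assume \ref{it:semi-left-exactreflection} and define $\mathscr{T}$ to be the full subcategory of all $C$ with $\mathsf{F}(C)=0$; it is replete since the defining condition is invariant under isomorphism. Condition~1 of a torsion theory is immediate: any $f\colon T\to F$ with $T\in\mathscr{T}$ and $F\in\mathscr{F}$ factors through $\eta_T\colon T\to\mathsf{F}(T)=0$ by the universal property of the reflection, whence $f=0$. For condition~2, fix $C$; as $\mathscr{F}$ is normal epi-reflective the unit $\eta_C$ is a normal epimorphism, and setting $\varepsilon_C=\ker(\eta_C)$ with domain $T$ one has $\eta_C=\operatorname{coker}(\varepsilon_C)$, because in a pointed category with kernels every normal epimorphism is the cokernel of its own kernel. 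Thus
\begin{equation*}
	\begin{tikzcd}
		0\arrow[r] &T\arrow[r,"\varepsilon_C"] &C \arrow[r,"\eta_C"] &\mathsf{F}(C)\arrow[r] &0
	\end{tikzcd}
\end{equation*}
is short exact with $\mathsf{F}(C)\in\mathscr{F}$, and it only remains to check $T\in\mathscr{T}$.

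The heart of the argument — and the step I expect to be the main obstacle — is the vanishing $\mathsf{F}(T)=0$. The idea is to realise $T$ as the pullback of $\eta_C$ along the zero morphism $0\to\mathsf{F}(C)$, with $0\in\mathscr{F}$, which is precisely a pullback of the shape preserved by a semi-left exact reflection. Applying $\mathsf{F}$ gives $\mathsf{F}(T)\cong\mathsf{F}(C)\times_{\mathsf{F}(\mathsf{F}(C))}\mathsf{F}(0)$; since the unit $\eta_{\mathsf{F}(C)}$ is an isomorphism and $\mathsf{F}(0)=0$, this pullback is $0$. Hence $T\in\mathscr{T}$, the displayed sequence is the required decomposition \eqref{eq:sesTT}, and $(\mathscr{T},\mathscr{F})$ is a torsion theory.

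In both directions the standing hypotheses on $\mathscr{C}$ enter in an essential way: pullback stability of normal epimorphisms and preservation of kernels by pullback are exactly what allow the pulled-back sequences to be recognised as torsion decompositions (forward direction) and what make the ``kernel as a pullback along $0$'' trick available (backward direction). I therefore expect the delicate points to be these structural verifications rather than any single hard computation, the two pivotal facts being the uniqueness of the decomposition \eqref{eq:sesTT} and the semi-left exact preservation of the kernel pullback.
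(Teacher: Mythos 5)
The paper does not prove this proposition at all: it is stated as a recalled result with a citation to \cite{janelidze.tholen:2007}, so there is no in-text argument to compare yours against. Judged on its own, your proof is correct and is essentially the standard argument for this equivalence. In the forward direction the key steps all check out: $p_2$ is a normal epimorphism by the pullback-stability hypothesis (this is the one place that hypothesis is genuinely needed), $\mathrm{Ker}(p_2)\cong\mathrm{Ker}(\eta_B)$ by pasting of pullbacks, and the uniqueness of the decomposition \eqref{eq:sesTT} identifies $p_2$ with $\eta_P$; one small step you leave implicit is that $\mathsf{F}(p_2)$ is then an isomorphism, which follows from $\mathsf{F}(p_2)\circ\eta_P=\eta_X\circ p_2=\eta_X\circ\psi\circ\eta_P$ and the fact that $\eta_P$ is an epimorphism. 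You also take normal epi-reflectivity for granted from the recalled facts while re-proving semi-left exactness; for a self-contained proof you would add the one-line verification that $\eta_C=\operatorname{coker}(\varepsilon_C)$ has the reflection's universal property because any $f\colon C\to F'$ with $F'\in\mathscr{F}$ kills $\varepsilon_C$ by axiom~1. The backward direction --- realising $\mathrm{Ker}(\eta_C)$ as the pullback of $\eta_C$ along $0\to\mathsf{F}(C)$ and applying semi-left exactness to get $\mathsf{F}(T)\cong\mathsf{F}(C)\times_{\mathsf{FF}(C)}0=0$ --- is exactly right; note only that your closing remark slightly misattributes the role of pullback-stable normal epimorphisms there, since that direction uses only the existence of kernels, not their stability.
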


A torsion theory $ (\mathscr{T},\mathscr{F}) $ in a category $ \mathscr{C} $ is called \textbf{$ \mathscr{M} $-hereditary} for a class of monomorphisms $ \mathscr{M} $ in $ \mathscr{C} $, which is closed under composition with isomorphisms, if $ \mathscr{T} $ is closed under $ \mathscr{M} $-subobjects in $ \mathscr{C} $, meaning that, if $ m:A\to B $ is a morphism in $ \mathscr{M} $ and $ B $ is in $ \mathscr{T} $, then $ A $ is in $ \mathscr{T} $. We call a morphism a \textbf{protosplit monomorphism} if it is the kernel of some split epimorphism. We recall that a \textbf{homological category} is a category that is pointed, protomodular \cite{bourn:1991} and regular \cite{barr.grillet.vanosdol:1971}. A regular category is a finitely complete category that has coequalizers of kernels pairs and pullback stable regular epimorphisms. In the pointed context, protomodularity is equivalent to the validity to the Split Short Five Lemma. This means that, given a diagram 
\begin{equation*}
	\begin{tikzcd}
		A \arrow[d, "u"'] \arrow[r, "k"] &B \arrow[d, "v"'] \arrow[r, shift right=2, "f"'] &C \arrow[d, "w"] \arrow[l, shift right=2, "s"']\\
		A'\arrow[r, "k'"'] &B' \arrow[r, shift right=2, "f'"'] &C,' \arrow[l, shift right=2, "s'"']
	\end{tikzcd}
\end{equation*}
where $ fs=1_C $, $ f's'=1_{C'} $, $ k $ is the kernel of $ f $ and $ k' $ is the kernel of $ f' $, if $ u,w $ are isomorphisms, then $ v $ is an isomorphism, too. We can now recall \cite[Theorem 2.6]{everaert.gran:2014}. 

\begin{proposition}\label{prop:M-hereditarytorsiontheory}
	Let $ (\mathscr{T},\mathscr{F}) $ be a torsion theory in a homological category $ \mathscr{C} $. Then the following conditions are equivalent: 
	\begin{enumerate}
		\item The reflection $ \mathsf{F}:\mathscr{C}\to\mathscr{F} $ is protoadditive \cite{everaert.gran:2010}, i.e., it preserves split short exact sequences. 
		\item $ (\mathscr{T},\mathscr{F}) $ is $ \mathscr{M} $-hereditary, where $ \mathscr{M} $ is the class of protosplit monomorphisms in $ \mathscr{C} $. 
	\end{enumerate}
\end{proposition}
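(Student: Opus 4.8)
The plan is to prove the two implications separately, using for each object $X$ the torsion short exact sequence $0\to\mathsf{T}(X)\xrightarrow{\varepsilon_X}X\xrightarrow{\eta_X}\mathsf{F}(X)\to0$ together with two elementary consequences of the axioms: any morphism from an object of $\mathscr{T}$ to an object of $\mathscr{F}$ vanishes, and $X\in\mathscr{T}$ if and only if $\mathsf{F}(X)=0$. I suppress the inclusion $\mathsf{U}$ throughout, and I use that $\mathsf{F}$, being a left adjoint, preserves cokernels and split epimorphisms.

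For $(1)\Rightarrow(2)$, assume $\mathsf{F}$ is protoadditive and let $m\colon A\to B$ be a protosplit monomorphism with $B\in\mathscr{T}$. By definition $m$ is the kernel of some split epimorphism $f\colon B\to D$, which gives a split short exact sequence $0\to A\xrightarrow{m}B\xrightarrow{f}D\to0$. Applying the protoadditive functor $\mathsf{F}$ produces a short exact sequence $0\to\mathsf{F}(A)\xrightarrow{\mathsf{F}(m)}\mathsf{F}(B)\to\mathsf{F}(D)\to0$. Since $B\in\mathscr{T}$ forces $\mathsf{F}(B)=0$, the monomorphism $\mathsf{F}(m)$ has zero codomain, so $\mathsf{F}(A)=0$ and hence $A\in\mathscr{T}$. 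Thus $\mathscr{T}$ is closed under protosplit subobjects, which is precisely $\mathscr{M}$-heredity.

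For $(2)\Rightarrow(1)$, fix a split short exact sequence $0\to A\xrightarrow{k}B\xrightarrow{f}C\to0$ with section $s$ of $f$. Since $\mathsf{F}$ preserves split epimorphisms and cokernels, $\mathsf{F}(f)$ is a split epimorphism with $\mathsf{F}(f)=\mathrm{coker}\,\mathsf{F}(k)$, so the only point to settle is exactness at $\mathsf{F}(B)$, i.e. that $\mathsf{F}(k)=\ker\mathsf{F}(f)$. The strategy is first to show that the coreflection sends the sequence to a short exact sequence $0\to\mathsf{T}(A)\xrightarrow{\mathsf{T}(k)}\mathsf{T}(B)\xrightarrow{\mathsf{T}(f)}\mathsf{T}(C)\to0$. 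Here $\mathsf{T}(f)$ is split by $\mathsf{T}(s)$, and $\mathsf{T}(k)$ is a monomorphism because $\varepsilon_B\circ\mathsf{T}(k)=k\circ\varepsilon_A$ is a composite of monomorphisms; moreover $\mathsf{T}(k)$ factors through $N:=\ker\mathsf{T}(f)$. This is where $\mathscr{M}$-heredity enters: as $\mathsf{T}(f)$ is a split epimorphism, its kernel $N\to\mathsf{T}(B)$ is a protosplit monomorphism into an object of $\mathscr{T}$, whence $N\in\mathscr{T}$. Composing $N\to\mathsf{T}(B)\xrightarrow{\varepsilon_B}B$ gives a monomorphism $n\colon N\to B$ with $f\circ n=0$, so $n$ factors through $k$ via some $\bar n\colon N\to A$; since $N\in\mathscr{T}$ the morphism $\eta_A\circ\bar n$ vanishes, so $\bar n$ factors through $\varepsilon_A$, producing a monomorphism $N\to\mathsf{T}(A)$. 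A short diagram chase, using that $\varepsilon_A$, $\varepsilon_B$ and $k$ are monomorphisms and the naturality square $\varepsilon_B\circ\mathsf{T}(k)=k\circ\varepsilon_A$, then shows this map to be a two-sided inverse of the comparison $\mathsf{T}(A)\to N$. Hence the torsion row is short exact.

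To finish, I would combine the three torsion short exact sequences (the columns $0\to\mathsf{T}(X)\to X\to\mathsf{F}(X)\to0$ for $X\in\{A,B,C\}$) with the short exactness of the top torsion row and of the middle row $0\to A\to B\to C\to0$, and invoke the $3\times3$ lemma valid in homological categories to conclude that the bottom row $0\to\mathsf{F}(A)\to\mathsf{F}(B)\to\mathsf{F}(C)\to0$ is short exact; this gives $\mathsf{F}(k)=\ker\mathsf{F}(f)$ and hence protoadditivity. I expect the main obstacle to be the identification $\mathsf{T}(A)\cong\ker\mathsf{T}(f)$ inside $(2)\Rightarrow(1)$: this is exactly the step where $\mathscr{M}$-heredity is indispensable (to know $N\in\mathscr{T}$) and it must be done by the explicit subobject argument rather than by a formal adjunction argument. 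A secondary technical point is selecting the correct non-abelian form of the $3\times3$ lemma; since here the two upper rows and all columns are short exact and the bottom right-hand morphism is even a split epimorphism, the homological $3\times3$ lemma applies.
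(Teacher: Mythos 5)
Your argument is correct. Note, however, that the paper itself gives no proof of this statement: it is recalled verbatim from \cite[Theorem 2.6]{everaert.gran:2014}, so there is no in-paper proof to compare against. Your implication $(1)\Rightarrow(2)$ is the standard one (apply $\mathsf{F}$ to the split short exact sequence $0\to A\to B\to D\to 0$ and use $B\in\mathscr{T}\Leftrightarrow\mathsf{F}(B)=0$). For $(2)\Rightarrow(1)$ you correctly isolate the crux, namely the identification $\mathsf{T}(A)\cong\ker\mathsf{T}(f)$, and this is exactly where heredity must enter: $\ker\mathsf{T}(f)\to\mathsf{T}(B)$ is protosplit into a torsion object, hence torsion, and your factorization through $k=\ker f$ and then through $\varepsilon_A=\ker\eta_A$ (using that morphisms from torsion to torsion-free objects vanish) together with the mono-cancellation chase does establish the two-sided inverse. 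Your concluding step via the normalized $3\times3$ lemma is legitimate in a homological category (all three columns and the top and middle rows are short exact, the bottom row is a complex whose right-hand map is a split, hence normal, epimorphism), although it is heavier machinery than strictly necessary: since $\mathsf{F}(f)$ is already a split epimorphism, one can instead compare the split short exact sequence $0\to\ker\mathsf{F}(f)\to\mathsf{F}(B)\to\mathsf{F}(C)\to 0$ with the image of the original one and conclude by protomodularity (the split short five lemma), which is closer to the argument in \cite{everaert.gran:2014}. Either route is valid; yours trades a direct protomodularity argument for a known homological lemma, at the cost of having to check the hypotheses of the $3\times3$ lemma, which you do.
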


\subsection{Torsion theory $ (\mathscr{T},\mathscr{F}) $ in $ \mathscr{A}^\mathbb{T} $}\label{subsec:torsiontheoryTF}

In this section, we consider the functor category $ \mathscr{A}^\mathbb{T} $, where $ \mathbb{T} $ is a finite category and $ \mathscr{A} $ is a quasi-abelian category. Let us recall the definition of the latter notion. 

\begin{remark}[Quasi-abelian categories]
	Let $ \mathscr{A} $ be a preabelian category, i.e. an additive category that has kernels and cokernels. Then, for any morphism $ f:A\to B $, there exists a unique morphism $ \varphi:\mathrm{Cok}(\mathrm{ker}(f))\to\mathrm{Ker}(\mathrm{cok}(f)) $ such that the diagram
	\begin{equation*}
		\begin{tikzcd}
			A \arrow[dr, "\mathrm{cok}(\mathrm{ker}(f))"']\arrow[rrr, "f"]&&&B\\
			&\mathrm{Cok}(\mathrm{ker}(f)) \arrow[r, "\varphi"'] &\mathrm{Ker}(\mathrm{cok}(f)) \arrow[ur, "\mathrm{ker}(\mathrm{cok}(f))"']&
		\end{tikzcd}
	\end{equation*}
	commutes. The category $ \mathscr{A} $ is called \textbf{right semi-abelian} \cite{rump:2001} if $ \varphi $ is an epimorphism. Hence a preabelian category is right semi-abelian if and only if any arrow has an (epimorphism, normal monomorphism)-factorization.  Dually, $\mathscr{A}$ is called \textbf{left semi-abelian} if $ \varphi $ is a monomorphism, and \textbf{semi-abelian} \cite{gruson:1966, palamodov:1971, rump:2001} if it is both right and left semi-abelian. We note that this notion of semi-abelianness is different from the notion of a semi-abelian category in the sense of \cite{janelidze.marki.tholen:2002}. As it is shown in \cite[Proposition 1]{rump:2001}, $ \mathscr{A} $ is right semi-abelian if and only if the pushout of a normal monomorphism along any morphism is a monomorphism. $ \mathscr{A} $ is called \textbf{right quasi-abelian} (see \cite{rump:2001}, where, instead of quasi-abelian, the terminology \textit{almost abelian} is used) if normal monomorphisms are pushout stable, \textbf{left quasi-abelian} if normal epimorphisms are pullback stable, and \textbf{quasi-abelian} \cite{yoneda:1960, gruson:1966, raikov:1969, schneiders:1999, rump:2001} if it is both left and right quasi-abelian. Thus, $ \mathscr{A} $ is right (left) semi-abelian whenever $ \mathscr{A} $ is right (left) quasi-abelian. Moreover, a preabelian category is quasi-abelian if and only it is right (left) semi-abelian and left (right) quasi-abelian. As reported in \cite[5.3 Remarks]{rosicky.tholen:2007}, G. Janelidze observed that a category is quasi-abelian if and only if it is homological and co-homological, i.e., its opposite category is homological, too. In particular, any quasi-abelian category is regular. It is clear that any abelian category is an example of a quasi-abelian category. By the so-called Tierney's equation, a category is abelian if and only if it is (Barr-)exact \cite{barr.grillet.vanosdol:1971} and additive. We recall that an exact category is a regular category such that any equivalence relation is effective, i.e., any equivalence relation is isomorphic to a kernel pair of a morphism. In \cite[4 Corollary]{rump:2001}, it is shown that a category is quasi-abelian if and only if it is the torsion or torsion-free class in an abelian category. Hence the categories of torsion and torsion-free abelian groups are quasi-abelian categories that are not exact. Furthermore, a lot of examples arise in topological algebra and functional analysis. The categories of topological abelian groups, locally compact abelian groups, topological vector spaces, normed spaces, Banach spaces, locally convex spaces and Fr\'{e}chet spaces are quasi-abelian. Quasi-abelian categories provide a good framework for non-abelian homological algebra, K-theory and cohomological theory of sheaves \cite{schneiders:1999}.  
	
	For a long time, it was not clear if there were categories that were semi-abelian but not quasi-abelian. The so-called Raikov's conjecture that the axioms of a semi-abelian category already imply those of a quasi-abelian category (that Raikov called semi-abelian \cite{raikov:1969}) was proven false in 2005. In \cite{bonet.dierolf:2005}, the authors show that the category of bornological spaces is semi-abelian but not quasi-abelian. We refer the reader to \cite{kopylov.wegner:2012} for examples that are left semi-abelian (quasi-abelian) but not right semi-abelian (quasi-abelian) and vice versa. In \cite[2. Historical remark]{rump:2008}, the reader can found a historical remark on the history of research on semi-abelian and quasi-abelian categories. 
\end{remark} 

Given a replete full subcategory $ \mathbb{S} $ of the finite category $ \mathbb{T} $, we define $ \mathscr{F} $ to be the full subcategory of $ \mathscr{A}^\mathbb{T} $ whose objects are given by the functors $ F:\mathbb{T}\to\mathscr{A} $ such that $ F(T)=0 $ for all $ T\notin\mathbb{S} $. In particular, we can choose $ \mathbb{T} $ such that $ \mathscr{A}^\mathbb{T} $ is equivalent to $ \mathrm{Arr}^n(\mathscr{A}) $, and $ \mathbb{S} $ such that $ \mathscr{F} $ is equivalent to $ n\textrm{-Arr}(\mathscr{A}) $, which is the category whose objects are given by chains of $ n $ composable arrows such that each composite of two consecutive ones is $ 0 $. Namely, we can set $ \mathbb{T} $ to be $ \mathcal{P}(n)^\mathrm{op} $, where $ \mathcal{P}(n) $ denotes the partial order given by the power set of $ n $ viewed as a category. Here we consider the natural numbers by their standard (von Neumann) construction, and put $ 0:=\emptyset $ and $ n:=\{0,\ldots,n-1\} $ for $ n\geq1 $. As $ \mathbb{S} $, we may choose the full subcategory of $ \mathcal{P}(n)^\mathrm{op} $ whose objects are given by $ 0,\ldots,n $, see also Section~\ref{subsec:highercentralextensions} for more details. For $ n=3 $, an object in $ \mathrm{Arr}^3(\mathscr{A}) $ can be depicted as 
\begin{equation*}
	\begin{tikzcd}
		A_3 \arrow[rr] \arrow[dd] \arrow[dr] &&A_2 \arrow[dd] \arrow[dr] &\\
		&A_{\{1,2\}} \arrow[dd] \arrow[rr] &&A_{\{1\}} \arrow[dd]\\
		A_{\{0,2\}} \arrow[dr] \arrow[rr]  &&A_1 \arrow[dr] &\\
		&A_{\{2\}} \arrow[rr] &&A_0,
	\end{tikzcd}
\end{equation*}
and an object in $ 3\textrm{-Arr}(\mathscr{A}) $ as 
\begin{equation*}
	\begin{tikzcd}
		A_3 \arrow[r] &A_2 \arrow[r] &A_1 \arrow[r] &A_0.
	\end{tikzcd}
\end{equation*}
We note that $ \mathrm{Arr}^n(\mathscr{A}) $ and $ n\textrm{-Arr}(\mathscr{A}) $ are equivalent to $ \mathrm{Grpd}^n(\mathscr{A}) $ and $ n\textrm{-Grpd}(\mathscr{A}) $, respectively, whenever $ \mathscr{A} $ is additive and has kernels. 

Given an object $ T\in\mathbb{T} $, we denote by $ [T] $ the finite set of morphisms $ t:T'\to T $ with $ T'\notin\mathbb{S} $, and set $ d(t):=T' $ for such a morphism. For a functor $ F\in\mathscr{A}^\mathbb{T} $, we denote by 
\begin{equation*}
	[F(t)]_{t\in[T]}:\bigoplus_{t\in[T]}F(d(t))\to F(T)
\end{equation*}
the unique morphism such that $ [F(t)]_{t\in[T]}\iota_{t'}=F(t') $, where $ \iota_{t'}:F(d(t'))\to\bigoplus_{t\in[T]}F(d(t)) $ is the corresponding coproduct inclusion for $ t'\in[T] $. We now show that $ \mathscr{F} $ is the torsion-free subcategory of the torsion theory $ (\mathscr{T},\mathscr{F}) $ in $ \mathscr{A}^\mathbb{T} $, where $ \mathscr{T} $ is the full subcategory of $ \mathscr{A}^\mathbb{T} $ whose objects are given by the functors $ F\in\mathscr{A}^\mathbb{T} $ with $ [F(t)]_{t\in[T]} $ being an epimorphism for all $ T\in\mathbb{T} $.

\begin{proposition}\label{prop:torsiontheory}
	$ (\mathscr{T},\mathscr{F}) $ is a torsion theory in $ \mathscr{A}^\mathbb{T} $ whenever $ \mathscr{A} $ is a quasi-abelian categoy. 
\end{proposition}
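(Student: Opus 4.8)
The plan is to verify the two defining axioms of a torsion theory directly, using throughout that $\mathscr{A}^\mathbb{T}$ is pointed and that, since $\mathbb{T}$ is finite and $\mathscr{A}$ is preabelian, kernels, cokernels and biproducts in $\mathscr{A}^\mathbb{T}$ are computed pointwise. Consequently a sequence in $\mathscr{A}^\mathbb{T}$ is short exact if and only if each of its components is short exact in $\mathscr{A}$. The single place where quasi-abelianness is needed will be to supply, for every morphism, an (epimorphism, normal monomorphism)-factorization, as recorded for right semi-abelian categories in the preliminary remark.

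For the first axiom, let $f\colon T\to F$ be a morphism with $T\in\mathscr{T}$ and $F\in\mathscr{F}$. For $S\notin\mathbb{S}$ one has $F(S)=0$, hence $f_S=0$. For $S\in\mathbb{S}$ and every $t\in[S]$, naturality gives $f_S\circ T(t)=F(t)\circ f_{d(t)}$; since $d(t)\notin\mathbb{S}$ the factor $f_{d(t)}$ is $0$, so $f_S\circ T(t)=0$ for all $t\in[S]$, that is $f_S\circ[T(t)]_{t\in[S]}=0$. As $T\in\mathscr{T}$, the morphism $[T(t)]_{t\in[S]}$ is an epimorphism, whence $f_S=0$; thus $f=0$.

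For the second axiom I build the short exact sequence pointwise. I define $F:=\mathsf{F}(C)$ by $F(S):=\mathrm{Cok}([C(t)]_{t\in[S]})$ for $S\in\mathbb{S}$ and $F(T):=0$ for $T\notin\mathbb{S}$, and let $\eta_{C,S}\colon C(S)\to F(S)$ be the cokernel projection $\mathrm{cok}([C(t)]_{t\in[S]})$ for $S\in\mathbb{S}$ and $\eta_{C,T}:=0$ otherwise. For a morphism $s\colon S\to S'$ in $\mathbb{S}$ and $t\in[S]$ the composite $s\circ t$ lies in $[S']$, so $\eta_{C,S'}\circ C(s)\circ C(t)=\eta_{C,S'}\circ C(s\circ t)=0$; this shows $\eta_{C,S'}\circ C(s)$ factors uniquely through the epimorphism $\eta_{C,S}$, which simultaneously defines $F$ on morphisms, makes $\eta_C$ natural, and (by the same uniqueness) yields functoriality of $F$. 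The squares involving an object outside $\mathbb{S}$ commute trivially, using that any $s\colon S\to S'$ with $S\notin\mathbb{S}$ itself belongs to $[S']$. Each $\eta_{C,S}$ is a normal epimorphism by construction, so setting $T:=\mathsf{T}(C):=\ker(\eta_C)$ pointwise gives a short exact sequence $0\to T\to C\xrightarrow{\eta_C} F\to 0$ in $\mathscr{A}^\mathbb{T}$ with $F\in\mathscr{F}$.

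The remaining, and I expect decisive, point is that $T\in\mathscr{T}$. For $S\notin\mathbb{S}$ the identity $1_S$ lies in $[S]$, so $[T(t)]_{t\in[S]}$ is a split epimorphism. For $S\in\mathbb{S}$ one has $T(d(t))=C(d(t))$ since $d(t)\notin\mathbb{S}$, and naturality of the monomorphism $\varepsilon_C\colon T\hookrightarrow C$ gives $\varepsilon_{C,S}\circ[T(t)]_{t\in[S]}=[C(t)]_{t\in[S]}$. Here quasi-abelianness enters: the (epimorphism, normal monomorphism)-factorization of $[C(t)]_{t\in[S]}$ is exactly $\bigoplus_{t\in[S]}C(d(t))\twoheadrightarrow\mathrm{Ker}(\mathrm{cok}([C(t)]_{t\in[S]}))\hookrightarrow C(S)$, whose normal monomorphism part is precisely $\varepsilon_{C,S}=\ker(\eta_{C,S})$. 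Cancelling the monomorphism $\varepsilon_{C,S}$ identifies $[T(t)]_{t\in[S]}$ with the epimorphic factor, so it is an epimorphism and $T\in\mathscr{T}$. The main obstacle is therefore this last step: without the factorization guaranteed by right semi-abelianness the comparison map into the normal image need not be epic, and the torsion part would fall outside $\mathscr{T}$; everything else is the formal bookkeeping of assembling pointwise cokernels into $F$ and $\eta_C$, whose crux is the stability $t\in[S]\Rightarrow s\circ t\in[S']$.
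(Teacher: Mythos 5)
Your proposal is correct and follows essentially the same route as the paper: both verify the two axioms directly, with the first axiom reduced to cancelling the epimorphism $[T(t)]_{t\in[S]}$ against the vanishing of the components at objects outside $\mathbb{S}$, and the second axiom handled by constructing $\mathsf{F}(C)$ via the pointwise cokernels of $[C(t)]_{t\in[S]}$ and taking $\mathsf{T}(C)$ to be the pointwise kernel of $\eta_C$. Your decisive step --- identifying $[T(t)]_{t\in[S]}$ with the epimorphic part of the (epimorphism, normal monomorphism)-factorization of $[C(t)]_{t\in[S]}$ by cancelling the monomorphism $\varepsilon_{C,S}$ --- is exactly the paper's argument that the comparison morphism $e$ into $\mathrm{Ker}(\mathrm{cok}([C(t)]_{t\in[S]}))$ is an epimorphism by right semi-abelianness.
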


\begin{proof}
	Firstly, let $ \alpha:F\to G $ be a morphism in $ \mathscr{A}^\mathbb{T} $ from an object $ F\in\mathscr{T} $ to an object $ G\in\mathscr{F} $. We show that $ \alpha=0 $. Let $ T $ be an object in $ \mathbb{T} $. We consider the commutative square 
	\begin{equation*}
		\begin{tikzcd}
			\bigoplus_{t\in[T]} F(d(t)) \arrow[rr, "{[}F(t){]}_{t\in{[}T{]}}"] \arrow[d, "\bigoplus_{t\in{[}T{]}}\alpha_{d(t)}"'] && F(T) 
			\arrow[d, "\alpha_T"]\\
			\bigoplus_{t\in[T]} G(d(t)) \arrow[rr, "{[}G(t){]}_{t\in{[}T{]}}"'] && G(T).
		\end{tikzcd}
	\end{equation*}
	By assumption, $ [F(t)]_{t\in[T]} $ is an epimorphism and $ \bigoplus_{t\in{[}T{]}}\alpha_{d(t)} $ is $ 0 $ since $ G(d(t))=0 $ for all $ t\in[T] $. Thus, $ \alpha_T=0 $. 
	
	Secondly, let $ F\in\mathscr{A}^\mathbb{T} $ be arbitrary. We define a short exact sequence 
	\begin{equation*}\tag*{($ \ast $)}
		\begin{tikzcd}
			0 \arrow[r] & \mathsf{T}(F) \arrow[r, "\varepsilon_F"] & F \arrow[r, "\eta_F"] & \mathsf{F}(F) \arrow[r] & 0
		\end{tikzcd}
	\end{equation*}
	in $ \mathscr{A}^\mathbb{T} $, where $ \mathsf{T}(F)\in \mathscr{T} $ and $ \mathsf{F}(F)\in\mathscr{F} $. For $ T\in\mathbb{T} $, we set $ \mathsf{F}(F)(T):=\mathrm{Cok}([F(t)]_{t\in[T]}) $ and $ (\eta_F)_T:=\mathrm{cok}([F(t)]_{t\in[T]}) $. For a morphism $ \tau:T\to T' $ in $ \mathbb{T} $, we define $ \mathsf{F}(\tau) $ to be the unique morphism such that the right-hand side of the diagram 
	\begin{equation*}
		\begin{tikzcd}
			{\bigoplus_{t\in[T]}F(d(t))} \arrow[rr, "{[F(t)]_{t\in[T]}}"] \arrow[d, "\varphi"'] && {F(T)} \arrow[d, "{F(\tau)}"] \arrow[rr, "{(\eta_F)_T}"] && {\mathsf{F}(F)(T)} \arrow[d, "{\mathsf{F}(F)(\tau)}", dotted] \\
			{\bigoplus_{t'\in[T']}F(d(t'))} \arrow[rr, "{[F(t')]_{t'\in[T']}}"'] && {F(T')} \arrow[rr, "{(\eta_F)_{T'}}"'] && {\mathsf{F}(F)(T')}
		\end{tikzcd}
	\end{equation*}
	commutes, where $ \varphi $ is the unique morphism such that $ \varphi\iota_t=\iota_{\tau t} $ for any $ t\in[T] $. It is immediate to see that $ \mathsf{F}(F) $ is a functor from $ \mathbb{T} $ to $ \mathscr{A} $ and that $ \eta_F $ is a natural transformation from $ F $ to $ \mathsf{F}(F) $. Furthermore, for $ T\notin\mathbb{S} $, $ \mathsf{F}(F)(T)=0 $ since in this case $ 1_T\in[T] $. Thus, $ \mathsf{F}(F)\in\mathscr{F} $.
	
	We set $ \mathsf{T}(F)(T):=\mathrm{Ker}((\eta_F)_T) $ for any $ T\in\mathbb{T} $ and $ (\epsilon_F)_T:=\mathrm{ker}((\eta_F)_T) $. For $ \tau:T\to T' $ in $ \mathbb{T} $, we define $ \mathsf{T}(F)(\tau) $ to be the unique morphism such that the front right-hand side of the diagram
	\begin{equation*}
		\begin{tikzcd}
			\bigoplus_{t\in[T]} F(d(t)) \arrow[dr, "e"'] \arrow[dd, "\varphi"'] \arrow[rr, "{[}F(t){]}_{t\in{[}T{]}}" near start] && F(T) \arrow[dd, "F(\tau)"] \arrow[r, "(\eta_F)_T"] &\mathsf{F}(F)(T) \arrow[dd, "\mathsf{F}(F)(\tau)"]\\
			& \mathsf{T}(F)(T) \arrow[dd, "\mathsf{T}(F)(\tau)" near start, dotted] \arrow[ur,  "(\epsilon_F)_T"']&&\\
			\bigoplus_{t'\in[T']} F(d(t')) \arrow[rd, "e'"'] \arrow[rr, "{[}F(t'){]_{t'\in{[}T'{]}}}"' near start] && F(T') \arrow[r,  "(\eta_F)_{T'}"'] &\mathsf{F}(F)(T'){,}\\
			&\mathsf{T}(F)(T') \arrow[ur, "(\epsilon_F)_{T'}"']
		\end{tikzcd}
	\end{equation*}
	commutes, i.e., $ (\varepsilon_F)_{T'}\mathsf{T}(F)(\tau)=F(\tau)(\varepsilon_F)_T $, where $ e $ and $ e' $ are the unique morphisms that make the upper and lower triangles commute, respectively. It is clear that $ \mathsf{T}(F) $ is a functor from $ \mathbb{T} $ to $ \mathscr{A} $ and that $ \varepsilon_F $ is a natural transformation from $ \mathsf{T}(F) $ to $ F $. Let $ T\in\mathbb{T} $ and $ t\in[T] $. For $ \tau=t $, the above diagram looks as follows:  
	\begin{equation*}
		\begin{tikzcd}
			\bigoplus_{t'\in[d(t)]} F(d(t')) \arrow[dr,  "{[}F(t'){]}_{t'\in{[}d(t){]}}"'] \arrow[dd, "\varphi"'] \arrow[rr, "{[}F(t'){]}_{t'\in{[}d(t){]}}" near start] && F(d(t)) \arrow[dd, "F(t)"] \arrow[r, ] &0 \arrow[dd]\\
			& F(d(t)) \arrow[dd, "\mathsf{T}(F)(t)" near start] \arrow[ur, equal]&&\\
			\bigoplus_{t''\in[T]} F(d(t'')) \arrow[rd, "e"'] \arrow[rr, "{[}F(t''){]_{t''\in{[}T{]}}}"' near start] && F(T) \arrow[r, "(\eta_F)_T"'] &\mathsf{F}(F)(T)\\
			&\mathsf{T}(F)(T') \arrow[ur, "(\epsilon_F)_T"']
		\end{tikzcd}
	\end{equation*} 
	We compute 
	\begin{equation*}
		(\varepsilon_F)_Te\iota_t=[F(t'')]_{t''\in[T]}\iota_t=F(t). 
	\end{equation*}
	Hence $ \mathsf{T}(F)(t)=e\iota_t $ and $ [\mathsf{T}(F)(t)]_{t\in[T]}=e $. Thus, $ \mathsf{T}(F)\in\mathscr{T} $ since $ e $ is, by assumption, an epimorphism. The exactness of the short sequence $ (\ast) $ is then clear. 
\end{proof} 

We note that the properties of a quasi-abelian category that we used in the proof of Proposition~\ref{prop:torsiontheory} were only the existence of finite coproducts, kernels and cokernels, and the fact that the induced morphism from the domain of a morphism to the kernel of its cokernel is an epimorphism.

As we have seen in Section~\ref{subsec:torsiontheories}, Proposition~\ref{prop:torsiontheory} shows that $ \mathscr{F} $ is a normal epi-reflective subcategory of $ \mathscr{A}^\mathbb{T} $ with semi-left exact reflection $ \mathsf{F}:\mathscr{A}^\mathbb{T}\to\mathscr{F} $ and unit $ \eta $. Since $ \mathsf{F} $ is a left adjoint between additive categories,  $ \mathsf{F} $ is protoadditive. Proposition~\ref{prop:M-hereditarytorsiontheory} implies that $ (\mathscr{T},\mathscr{F}) $ is $ \mathscr{M} $-hereditary, where $ \mathscr{M} $ is the class of protosplit monomorphisms in $ \mathscr{A}^\mathbb{T} $. We will see later that $ \mathscr{T} $ is not closed under regular subobjects. We recall from the proof of Proposition~\ref{prop:torsiontheory} that $ \mathsf{F}(F)(T)=\mathrm{Cok}([F(t)]_{t\in[T]}) $ and $ (\eta_F)_T=\mathrm{cok}([F(t)]_{t\in[T]}) $ for any $ F\in\mathscr{A}^\mathbb{T} $ and $ T\in\mathbb{T} $. 

\begin{remark}
	The full subcategory $ \mathscr{F} $ is a normal epi-reflective subcategory of $ \mathscr{A}^\mathbb{T} $ as soon as $ \mathbb{T} $ is a finite category and $ \mathscr{A} $ is a pointed category with finite coproducts and cokernels. Moreover, $ \mathsf{F}(F) $ is given by a pointwise left Kan extension as we explain now. Let us denote by $ \mathbb{T}^* $ the category with set of objects $ \mathrm{Ob}(\mathbb{T}^*):=\mathrm{Ob}(\mathbb{T})\dot{\cup}\{0\} $ and sets of morphisms 
	\begin{equation*}
		\mathrm{Hom}_{\mathbb{T}^*}(T,T'):=\begin{cases}
			\mathrm{Hom}(T,T')\dot{\cup}\{0\} &\textrm{if $ T,T'\in\mathbb{T} $,}\\
			\{0\} &\textrm{else}.
		\end{cases}
	\end{equation*}
	Let us denote by $ \mathbb{Q} $ the category with set of objects $ \mathrm{Ob}(\mathbb{Q}):=\mathrm{Ob}(\mathbb{S}^*)=\mathrm{Ob}(\mathbb{S})\dot{\cup}\{0\} $ and sets of morphisms 
	\begin{equation*}
		\mathrm{Hom}_\mathbb{Q}(S,S'):=\mathrm{Hom}_{\mathbb{S}^*}(S,S')/_\sim,
	\end{equation*}
	where we identify a morphism $ s:S\to S' $ with $ 0 $ if it factors in $ \mathbb{T} $ through an object which is not in $ \mathbb{S} $. This yields a functor $ \pi:\mathbb{T}^*\to\mathbb{Q} $ with 
	\begin{align*}
		\pi(T):=\begin{rcases}
			\begin{dcases}
				T &\textrm{if $ T\in\mathbb{S} $,}\\
				0 &\textrm{else}
			\end{dcases}
		\end{rcases},\quad
		&\pi(t):=\begin{rcases}
			\begin{dcases}
				[t] &\textrm{if $ t\in\mathbb{S} $,}\\
				0 &\textrm{else}
			\end{dcases}
		\end{rcases}.
	\end{align*}
	Let us fix a zero object $ 0_\mathscr{A} $ in $ \mathscr{A} $. Then we get the diagram 
	\begin{equation*}
		\begin{tikzcd}
			\mathscr{A}^\mathbb{T} \arrow[d, shift right=2, "\mathsf{L}"'] &&\mathscr{F} \arrow[ll, "\mathsf{U}"'] \arrow[d, shift right=2, "\overline{\mathsf{L}}"']\\
			\mathrm{Pointed}[\mathbb{T}^*,\mathscr{A}] \arrow[u, shift right=2, "\mathsf{R}"'] &&\mathrm{Pointed}[\mathbb{Q},\mathscr{A}], \arrow[ll, "\pi^*"] \arrow[u, shift right=2, "\overline{\mathsf{R}}"']
		\end{tikzcd}
	\end{equation*}
	where $ \mathrm{Pointed}[\mathbb{T}^*,\mathscr{A}] $ denotes the full subcategory of $ \mathscr{A}^{\mathbb{T}^*} $ whose objects are the pointed functors, and analogously for $ \mathrm{Pointed}[\mathbb{Q},\mathscr{A}] $. Furthermore, $ \pi ^*$ is the functor induced by precomposition with $ \pi $, and $ \mathsf{L},\mathsf{R},\overline{\mathsf{L}},\overline{\mathsf{R}} $ are the functors such that 
	\begin{align*}
		\mathsf{L}(F)(T):=\begin{rcases}
			\begin{dcases}
				F(T) &\textrm{if $ T\in\mathbb{T} $,}\\
				0_\mathscr{A} &\textrm{if $ T=0 $}
			\end{dcases}
		\end{rcases},\quad
		&\mathsf{R}(F):=F|_\mathbb{T},\\
		\overline{\mathsf{L}}(F)(S):=\begin{rcases}
			\begin{dcases}
				F(S) &\textrm{if $ S\in\mathbb{S} $,}\\
				0_\mathscr{A} &\textrm{if $ S=0 $}			
			\end{dcases}
		\end{rcases},\quad
		&\overline{\mathsf{R}}(F)(T):=\begin{rcases}
			\begin{dcases}
				F(T) &\textrm{if $ T\in\mathbb{S} $,}\\
				0_\mathscr{A} &\textrm{if $ T\notin\mathbb{S} $}
			\end{dcases}
		\end{rcases},
	\end{align*}
	and $ \mathsf{U}\overline{\mathsf{R}}\simeq\mathsf{R}\pi^* $ and $ \mathsf{LU}\simeq\pi^*\overline{\mathsf{L}} $. It is well-known \cite{maclane:1971} that, if $ \mathscr{A} $ is finitely cocomplete, $ \pi^* $ has a left adjoint $ \mathrm{Lan}_\pi:\mathrm{Pointed}[\mathbb{T}^*,\mathscr{A}]\to\mathrm{Pointed}(\mathbb{Q},\mathscr{A}) $ for which 
	\begin{equation*}
		(\mathrm{Lan}_\pi F)(S)=\underset{(T,f)\in\pi\downarrow S}{\mathrm{colim}}(F\circ \varphi_S)
	\end{equation*}
	for any $ F\in\mathrm{Pointed}[\mathbb{T}^*,\mathscr{A}] $ and $ S\in\mathbb{Q} $, where $ \pi\downarrow S $ is the category whose objects are the morphisms $ f:\pi(T)\to S $ in $ \mathbb{Q} $, where $ T\in\mathbb{T}^* $, and morphisms between $ (T,f) $ and $ (T',f') $ are the morphisms $ t:T\to T' $ in $ \mathbb{T}^* $ such that $ f'\pi(t)=f $, and $ \varphi_S:\pi\downarrow S\to \mathbb{T}^* $ is the expected forgetful functor. Since $ \mathscr{A}^\mathbb{T} $ and $ 	\mathrm{Pointed}[\mathbb{T}^*,\mathscr{A}] $, and $ \mathscr{F} $ and $ \mathrm{Pointed}[\mathbb{Q},\mathscr{A}] $ are equivalent, respectively, $ \mathsf{U} $ has $ \mathsf{R}\circ \mathrm{Lan}_\pi\circ \overline{\mathsf{L}} $ as left adjoint. It is easily shown that, for any $ S\in\mathbb{S} $, 
	\begin{equation*}
		\underset{(T,f)\in\pi\downarrow S}{\mathrm{colim}}(F\circ \varphi_S)=\mathrm{Cok}([F(t)]_{t\in[S]}). 
	\end{equation*}
\end{remark}

\section{Central extensions in $ \mathscr{A}^\mathbb{T} $ with respect to $ \mathscr{F} $}\label{sec:centralextensionsinA^T}

In this section, we will see that $ \mathscr{F} $ is an admissible subcategory of $ \mathscr{A}^\mathbb{T} $ from the point of view of categorical Galois theory. Furthermore, we will characterize the trivial and central extensions in $ \mathscr{A}^\mathbb{T} $ with respect to $ \mathscr{F} $ when $ \mathscr{A} $ is abelian and quasi-abelian, respectively. We start with recalling some basic facts from categorical Galois theory and the categorical theory of central extensions. 

\subsection{Categorical Galois theory}\label{subsec:categoricalGaloistheory}

Categorical Galois theory was introduced and developed in \cite{janelidze:1989, janelidze:1990, janelidze:1991}. A \textbf{Galois structure} $ \Gamma=(\mathscr{C},\mathscr{F},\mathsf{F},\mathsf{U},\mathscr{E},\mathscr{Z}) $ consists of an adjunction 
\begin{equation}\label{eq:Galoisstructure}
	\begin{tikzcd}
		{\mathscr{C}} 
		\arrow[r,""{name=1, anchor=center, inner sep=0}, "\mathsf{F}", shift left=2] 
		& {\mathscr{F}},
		\arrow[l, ""{name=0, anchor=center, inner sep=0}, "\mathsf{U}", shift left=2]
		\arrow["\dashv"{anchor=center, rotate=-90}, draw=none, from=1, to=0]
	\end{tikzcd}
\end{equation}
and classes $ \mathscr{E} $ and $ \mathscr{Z} $ of morphisms in $ \mathscr{C} $ and $ \mathscr{F} $, respectively, such that the following conditions hold: 
\begin{enumerate}
	\item $ \mathscr{C} $ and $ \mathscr{F} $ admit all pullbacks along morphisms in $ \mathscr{E} $ and $ \mathscr{Z} $, respectively. 
	\item $ \mathscr{E} $ and $ \mathscr{Z} $ contain all isomorphisms, are closed under composition and are pullback stable. 
	\item $ \mathsf{F}(\mathscr{E})\subseteq\mathscr{Z} $ and $ \mathsf{U}(\mathscr{Z})\subseteq\mathscr{\mathscr{E}} $. 
\end{enumerate}

A Galois structure $ \Gamma=(\mathscr{C},\mathscr{F},\mathsf{F},\mathsf{U},\mathscr{E},\mathscr{Z}) $ induces, for any object $ B\in\mathscr{C} $, the adjunction 
\begin{equation}
	\begin{tikzcd}\label{eq:inducedGaloisstructure}
		{\mathscr{E}(B)}
		\arrow[r, ""{name=1, anchor=center, inner sep=0}, "\mathsf{F}^B", shift left=2]
		& {\mathscr{Z}(\mathsf{F}(B)),}
		\arrow[l, ""{name=0, anchor=center, inner sep=0}, "\mathsf{U}^B", shift left=2]
		\arrow["\dashv"{anchor=center, rotate=-90}, draw=none, from=1, to=0]
	\end{tikzcd}
\end{equation}
where $ \mathscr{E}(B) $ is the full subcategory of the slice category $ \mathscr{C}\downarrow B $ whose objects are given by the morphisms in $ \mathscr{E} $ with codomain $ B $, and analogously for $ \mathscr{Z}(\mathsf{F}(B)) $. The left adjoint $ \mathsf{F}^B $ maps a morphism $ f:A\to B $ in $ \mathscr{E} $ to $ \mathsf{F}(f):\mathsf{F}(A)\to\mathsf{F}(B) $. For a morphism $ \varphi:X\to\mathsf{F}(B) $ in $ \mathscr{Z} $, we consider the pullback
\begin{equation}\label{eq:admissibilitypullback}
	\begin{tikzcd}
		B\times_{\mathsf{UF}(B)} \mathsf{U}(X) 
		\arrow[r, "p_2"]
		\arrow[d, "p_1"']
		& \mathsf{U}(X)
		\arrow[d, "\mathsf{U}(\varphi)"]\\
		B \arrow[r, "\eta_B"']
		&\mathsf{UF}(B)
	\end{tikzcd}
\end{equation}
of $ \mathsf{U}(\varphi) $ along the $ B $-component $ \eta_B $ of the unit of the adjunction \eqref{eq:Galoisstructure}. Then $ p_1 $ is the image of $ \varphi $ under the right adjoint $ \mathsf{U}^B $. By pasting the $ \mathsf{F} $-image of the above pullback with the corresponding naturality square of the counit $ \varepsilon $ of the adjunction \eqref{eq:Galoisstructure}, we see that the composite $ \varepsilon_X\mathsf{F}(p_2):\mathsf{F}(B\times_{\mathsf{UF}(B)} \mathsf{U}(X))\to X $ is a morphism from $ \mathsf{F}(p_1) $ to $ \varphi $ in $ \mathscr{Z}(\mathsf{F}(B)) $: 
\begin{equation*}
	\begin{tikzcd}
		\mathsf{F}(B\times_{\mathsf{UF}(B)} \mathsf{U}(X))
		\arrow[r, "\mathsf{F}(p_2)"]
		\arrow[d, "\mathsf{F}(p_1)"']
		& \mathsf{FU}(X)
		\arrow[d, "\mathsf{FU}(\varphi)"]
		\arrow[r, "\varepsilon_X"]
		&X
		\arrow[d, "\varphi"]\\
		\mathsf{F}(B) \arrow[r, "\mathsf{F}(\eta_B)"']
		&\mathsf{FUF}(B)
		\arrow[r,"\varepsilon_{\mathsf{F}(B)}"']
		&\mathsf{F}(B)
	\end{tikzcd}
\end{equation*} 
This is the $ \varphi $-component of the counit $ \varepsilon^B $ of the adjunction \eqref{eq:inducedGaloisstructure}. 

The Galois structure $ \Gamma=(\mathscr{C},\mathscr{F},\mathsf{F},\mathsf{U},\mathscr{E},\mathscr{Z}) $ is called \textbf{admissible} if $ \mathsf{U}^B $ is fully faithful for all $ B\in\mathscr{C} $. Equivalently, the counit $ \varepsilon^B $ is an isomorphism for all $ B\in\mathscr{C} $. When $ \mathsf{U} $ is fully faithful, then $ \Gamma $ is admissible if and only if $ \mathsf{F} $ preserves all pullbacks of the form \eqref{eq:admissibilitypullback}, where $ \varphi $ lies in $ \mathscr{Z} $.

If $ \Gamma=(\mathscr{C},\mathscr{F},\mathsf{F},\mathsf{U},\mathscr{E},\mathscr{Z}) $ is a Galois structure, where $ \mathscr{F} $ is a torsion-free subcategory of a pointed category $ \mathscr{C} $ with kernels and pullback stable normal epimorphisms with reflection $ \mathsf{F} $ and inclusion $ \mathsf{U} $, then Proposition~\ref{prop:characterizationoftorsion-freesubcategories} implies that $ \Gamma $ is admissible. 

Given an admissible Galois structure $ \Gamma=(\mathscr{C},\mathscr{F},\mathsf{F},\mathsf{U},\mathscr{E},\mathscr{Z}) $, the morphisms in $ \mathscr{E} $ are called \textbf{extensions}. Let $ f:A\to B $ be an extension. 
\begin{enumerate}
	\item $ f $ is called \textbf{trivial extension} if the naturality square 
	\begin{equation}\label{eq:naturalitysquare}
		\begin{tikzcd}
			A \arrow[d, "f"'] \arrow[r, "\eta_A"] &\mathsf{UF}(A) \arrow[d, "\mathsf{UF}(f)"]\\
			B \arrow[r, "\eta_B"'] &\mathsf{UF}(B)
		\end{tikzcd}
	\end{equation}
	is a pullback. Equivalently, $ f $ lies in the essential image of the functor $ \mathsf{U}^B:\mathscr{Z}(\mathsf{F}(B))\to\mathscr{E}(B) $. 
	\item $ f $ is called \textbf{monadic extension} if it is effective $ \mathscr{E} $-descent, i.e., the pullback functor $ f^*:\mathscr{E}(B)\to\mathscr{E}(A) $ is monadic. 
	\item $ f $ is called \textbf{normal extension} if it is a monadic extension and either of the projections $ p_1 $ and $ p_2 $ of its kernel pair
	\begin{equation*}
		\begin{tikzcd}
			A\times_B A \arrow[r, "p_2"] \arrow[d, "p_1"'] &A \arrow[d, "f"]\\
			A \arrow[r, "f"'] &B
		\end{tikzcd}
	\end{equation*}
	is a trivial extension. 
	\item $ f $ is called \textbf{central extension} if there exists a monadic extension $ p:E\to B $ such that $ p_1 $ in the pullback 
	\begin{equation*}
		\begin{tikzcd}
			E\times_B A \arrow[d, "p_1"'] \arrow[r, "p_2"] &A \arrow[d, "f"]\\
			E \arrow[r, "p"'] &B
		\end{tikzcd}
	\end{equation*}
	is a trivial extension. 
\end{enumerate}

The admissibility of $ \Gamma $ implies that trivial extensions are pullback stable. Furthermore, any trivial extension is normal and any normal extension is central.

\begin{remark}
	We want to shortly mention the fundamental theorem of categorical Galois theory due to G. Janelidze \cite{janelidze:1991}. It states that there is, for any admissible Galois structure $ \Gamma=(\mathscr{C},\mathscr{F},\mathsf{F},\mathsf{U},\mathscr{E},\mathscr{Z}) $ and any monadic extension $ p:E\to B $, an equivalence of categories $ \mathrm{Split}(E,p)\simeq \mathscr{F}^{\downarrow_\mathscr{Z}\mathrm{Gal}(E,p)} $ between the category of extensions $ f:A\to B $ whose pullback along $ p $ is a trivial extension, and the category of discrete fibrations between pregroupoids in $ \mathscr{F} $ with codomain the Galois pregroupoid $ \mathrm{Gal}(E,p) $ and with components in $ \mathscr{Z} $. When $ p:E\to B $ factors through every other monadic extension over $ B $, then we have that $ \mathrm{Split}(E,p) $ is exactly the category $ \mathrm{CExt}(B) $ of central extensions over $ B $. When $ \mathscr{C} $ is exact, has enough regular projective objects and $ \mathscr{E} $ is the class of regular epimorphisms in $ \mathscr{C} $, then we can choose for any $ B $ in $ \mathscr{C} $ such a $ p $. 
\end{remark} 

We conclude this section by studying a situation where the central extensions can explicitly be described. 

\begin{proposition}\label{prop:centralextensionsprotoadditive}
	Let $ \Gamma=(\mathscr{C},\mathscr{F},\mathsf{F},\mathsf{U},\mathscr{E},\mathscr{Z}) $ be an admissible Galois structure, where $ \mathscr{C} $ is a pointed protomodular category, and $ \mathscr{F} $ is a replete full reflective subcategory of $ \mathscr{C} $ with protoadditive reflection $ \mathsf{F} $ and inclusion $ \mathsf{U} $. Furthermore, we assume that any morphism in $ \mathscr{E} $ is a monadic extension. For an extension $ f:A\to B $, the following conditions are equivalent:
	\begin{enumerate}
		\item \label{it:normalextension} $ f $ is a normal extension. 
		\item \label{it:centralextension} $ f $ is a central extension. 
		\item \label{it:kerneltorsionfree} $ \mathrm{Ker}(f) $ lies in $ \mathscr{F} $. 
	\end{enumerate}
\end{proposition}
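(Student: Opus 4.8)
The plan is to prove the chain of implications $(\ref{it:normalextension}) \Rightarrow (\ref{it:centralextension}) \Rightarrow (\ref{it:kerneltorsionfree}) \Rightarrow (\ref{it:normalextension})$. The first implication is free: it was already recalled in the excerpt that any normal extension is central, so nothing needs to be done there. The remaining two implications are where the work lies, and the protoadditivity of $\mathsf{F}$ together with the pointed protomodular setting are the tools I expect to carry the argument.

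For $(\ref{it:centralextension}) \Rightarrow (\ref{it:kerneltorsionfree})$, I would start from a monadic extension $p\colon E\to B$ such that the first projection $p_1$ of the pullback $E\times_B A$ is a trivial extension. The key observation is that pulling back $f$ along $p$ does not change the kernel: in a pointed protomodular (hence homological) category, the kernel of $p_1\colon E\times_B A\to E$ is canonically isomorphic to $\mathrm{Ker}(f)$, since pullback squares preserve kernels of the opposite arrows. So it suffices to show that the kernel of a \emph{trivial} extension lies in $\mathscr{F}$. For a trivial extension $g$, the defining naturality square \eqref{eq:naturalitysquare} is a pullback, and taking kernels of the horizontal reflection units gives that $\mathrm{Ker}(g)$ is isomorphic to $\mathrm{Ker}(\mathsf{UF}(g))$; but the latter is a kernel computed inside $\mathscr{F}$, hence already an object of $\mathscr{F}$. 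Thus $\mathrm{Ker}(f)\in\mathscr{F}$.

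For the crucial implication $(\ref{it:kerneltorsionfree}) \Rightarrow (\ref{it:normalextension})$, I would use that every morphism in $\mathscr{E}$ is assumed to be a monadic extension, so it remains only to show that the projections $p_1,p_2$ of the kernel pair $A\times_B A\rightrightarrows A$ are trivial extensions. By symmetry it is enough to treat $p_1$. The kernel of $p_1$ is again $\mathrm{Ker}(f)$ (the second factor of the kernel pair), so by hypothesis $\mathrm{Ker}(p_1)\in\mathscr{F}$. Now I would invoke protoadditivity: because $p_1$ is a split epimorphism (split by the diagonal $A\to A\times_B A$), its kernel sequence $0\to\mathrm{Ker}(f)\to A\times_B A\to A\to 0$ is a \emph{split} short exact sequence, and a protoadditive $\mathsf{F}$ preserves it. Comparing this with the image under $\mathsf{UF}$ and using that $\mathrm{Ker}(p_1)$ already lies in $\mathscr{F}$ (so that $\mathsf{UF}$ acts as the identity on it up to the unit iso), the Split Short Five Lemma from protomodularity forces the comparison map $A\times_B A\to A\times_{\mathsf{UF}(A)}\mathsf{UF}(A\times_B A)$ into the naturality square to be an isomorphism. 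Hence the naturality square \eqref{eq:naturalitysquare} for $p_1$ is a pullback, i.e.\ $p_1$ is trivial, and $f$ is normal.

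The main obstacle I anticipate is the last step: extracting triviality of $p_1$ from protoadditivity. Protoadditivity gives me that $\mathsf{F}$ turns the split kernel sequence of $p_1$ into a split exact sequence in $\mathscr{F}$, and the condition $\mathrm{Ker}(p_1)\in\mathscr{F}$ ensures that the kernels on both the top and bottom rows of the relevant comparison diagram agree. The delicate point is to arrange this data into a diagram to which the Split Short Five Lemma applies, identifying the correct split epimorphism and verifying that the two outer vertical maps (on the kernel and on $A$) are isomorphisms; protomodularity then yields that the middle map is an isomorphism, which is exactly the pullback condition defining a trivial extension. This is essentially the protoadditive analogue of the classical argument, and I expect the bookkeeping of which square is a pullback to be the only genuinely careful part.
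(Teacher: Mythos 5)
Your proof is correct and follows essentially the same route as the paper's (which itself repeats the argument of Everaert--Gran): pullback-stability of kernels, protoadditivity applied to the split kernel-pair projection $p_1$, and the Split Short Five Lemma from protomodularity to conclude that the naturality square is a pullback. The only (harmless) variation is in (\ref{it:centralextension})$\Rightarrow$(\ref{it:kerneltorsionfree}), where you use closure of the replete full reflective subcategory $\mathscr{F}$ under kernels to see that $\mathrm{Ker}(\mathsf{UF}(p_1))$ lies in $\mathscr{F}$, whereas the paper instead invokes protoadditivity to identify $\mathrm{Ker}(\mathsf{F}(p_1))$ with $\mathsf{F}(\mathrm{Ker}(p_1))$.
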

\begin{proof}
	For the convenience of the reader, we repeat the argument used in \cite[2.7. Proposition]{everaert.gran:2010}. By definition, \eqref{it:normalextension} implies \eqref{it:centralextension}. Let us assume that $ f:A\to B $ is a central extension, i.e., there exists a monadic extension $ p:E\to B $ such the pullback $ p_1 $ of $ f $ along $ p $ is a trivial extension. This means that the naturality square 
	\begin{equation*}
		\begin{tikzcd}
			E\times_B A \arrow[d, "p_1"'] \arrow[r, "\eta_{E\times_B A}"] &\mathsf{F}(E\times_B A) \arrow[d, "\mathsf{F}(p_1)"]\\
			E \arrow[r, "\eta_E"'] &\mathsf{F}(E)
		\end{tikzcd}
	\end{equation*}
	is a pullback. We have that the kernels of $ f $ and $ p_1 $ and $ \mathsf{F}(p_1) $ are isomorphic. Since $ \mathsf{F} $ is assumed to be protoadditive and $ p_1 $ is a split epimorphism, we have that $ \mathrm{Ker}(\mathsf{F}(p_1))\cong\mathsf{F}(\mathrm{Ker(p_1)}) $. Thus, $ \mathrm{Ker}(f) $ lies in $ \mathscr{F} $. It remains to show that \eqref{it:kerneltorsionfree} implies \eqref{it:normalextension}. Let us assume that the kernel of $ f $ lies in $ \mathscr{F} $. By assumption, $ f $ is a monadic extension. We show that $ p_1 $ in the pullback
	\begin{equation*}
		\begin{tikzcd}
			A\times_B A \arrow[d, "p_1"']\arrow[r, "p_2"] &A \arrow[d, "f"]\\
			A \arrow[r, "f"'] &B
		\end{tikzcd}
	\end{equation*}
	is a trivial extension, i.e., the naturality square 
	\begin{equation*}
		\begin{tikzcd}
			A\times_B A \arrow[d, "p_1"'] \arrow[r, "\eta_{A\times_B A}"] &\mathsf{F}(A\times_B A) \arrow[d, "\mathsf{F}(p_1)"]\\
			A \arrow[r, "\eta_A"'] &\mathsf{F}(A)
		\end{tikzcd}
	\end{equation*} 
	is a pullback. As before, we have that the kernels $ \mathrm{Ker}(f) $ of $ f $ and $ \mathrm{Ker}(p_1) $ of $ p_1 $ are isomorphic, and that $ \mathrm{Ker}(\mathsf{F}(p_1))\cong \mathsf{F}(\mathrm{Ker}(p_1)) $. Since $ \mathrm{Ker}(f) $ lies in $ \mathscr{F} $, it follows that $ \mathrm{Ker}(\mathsf{F}(p_1))\cong\mathrm{Ker}(p_1) $. By protomodularity, the naturality square is a pullback. 
\end{proof}

\subsection{Categorical theory of central extensions}\label{subsec:categoricaltheoryofcentralextensions}

Given a regular category $ \mathscr{C} $, a replete full reflective subcategory $ \mathscr{F} $ of $ \mathscr{C} $ is closed under subobjects in $ \mathscr{C} $, i.e., given a monomorphism $ f:A\to B $ in $ \mathscr{C} $, where $ B $ lies in $ \mathscr{F} $, then $ A $ lies in $ \mathscr{F} $, if and only if each component of the unit of the reflection is a regular epimorphism.  In this case, $ \mathscr{F} $ is a regular category, and a morphism in $ \mathscr{F} $ is a regular epimorphism (monomorphism) in $ \mathscr{F} $ if and only if it is a regular epimorphism (monomorphism) in $ \mathscr{C} $. However, when $ \mathscr{C} $ is exact, i.e., it has additionally effective equivalence relations, $ \mathscr{F} $ is not necessarily exact, as the example of  the subcategory of torsion-free abelian groups of the category of abelian groups shows \cite{borceux:1994}. If $ \mathscr{F} $ is a \textbf{Birkhoff subcategory} of $ \mathscr{C} $, i.e., if it is in addition closed under regular quotients in $ \mathscr{C} $, meaning that, if $ f:A\to B $ is a regular epimorphism in $ \mathscr{F} $, where $ A $ is in $ \mathscr{F} $, then $ B $ is in $ \mathscr{F} $, then $ \mathscr{F} $ is exact. As shown in \cite[Proposition 3.1]{janelidze.kelly:1994}, $ \mathscr{F} $ is a Birkhoff subcategory of $ \mathscr{C} $ if and only if the naturality square \eqref{eq:naturalitysquare} is a pushout square consisting of regular epimorphisms whenever $ f $ is a regular epimorphism. By Birkhoff's theorem, the Birkhoff subcategories of a variety of universal algebras are precisely its subvarieties. 

In \cite{janelidze.kelly:1994}, the authors consider a Galois structure $ \Gamma=(\mathscr{C},\mathscr{F},\mathsf{F},\mathsf{U},\mathscr{E},\mathscr{Z}) $, where $ \mathscr{C} $ is an exact category, $ \mathscr{F} $ is a Birkhoff subcategory of $ \mathscr{C} $ with reflection $ \mathsf{F} $ and inclusion $ \mathsf{U} $, and $ \mathscr{E} $ and $ \mathscr{Z} $ are the classes of regular epimorphisms in $ \mathscr{C} $ and $ \mathscr{F} $, respectively. They show that, if $ \mathscr{C} $ is in addition a Mal'tsev category \cite{carboni.lambek.pedicchio:1990}, i.e., any internal reflexive relation is an equivalence relation, then $ \Gamma $ is automatically admissible, and the notions of central and normal extensions coincide. Furthermore, an extension $ f:A\to B $ is trivial if and only if the pair of morphisms $ f $ and $ \eta_A $ is jointly monomorphic. This follows from \cite[Theorem 5.7]{carboni.kelly.pedicchio:1993}, where the authors show that the exact Mal'tsev categories are exactly those regular categories that satisfy the following property: given regular epimorphisms $ r:A\to B $ and $ s:A\to C $ with a common domain, their pushout exists and the induced map $ \omega $ as in the diagram
\begin{equation*}
	\begin{tikzcd}
		A \arrow[rrd, bend left=30, "s"] \arrow[ddr, bend right=30, "r"'] \arrow[rd, dotted, "\omega"]&&\\
		&B\times_D C \arrow[r, "p_2"] \arrow[d, "p_1"'] &C \arrow[d, "v"]\\
		&B \arrow[r, "u"'] &D,
	\end{tikzcd}
\end{equation*}
where the outer square is a pushout and the inner square is a pullback, is a regular epimorphism.

If we take $ \mathsf{F}:=\mathsf{Ab} $ to be the abelianization functor from the category $ \mathrm{Grp} $ of groups to its subcategory $ \mathrm{Ab} $ of abelian groups, the categorical notion of central extension exactly recovers the notion of central extension from group theory \cite{janelidze:1990}. More generally, \cite{janelidze.kelly:1994} generalizes the study of central extensions of $ \Omega $-groups with respect to a subvariety as in \cite{froehlich:1963, lue:1967, furtado-coelho:1972}. Furthermore, if $ \mathscr{C} $ is a Mal'tsev variety and $ \mathscr{F} $ is its subvariety of abelian algebras, the categorical central extensions are exactly the central extensions arising from commutator theory in universal algebra, see \cite{janelidze.kelly:2000}. 

\subsection{Central extensions in $ \mathscr{A}^\mathbb{T} $ with respect to $ \mathscr{F} $}\label{subsec:centralextensionsinA^T}

As in Section~\ref{subsec:torsiontheoryTF}, let $ \mathbb{T} $ be a finite category and $ \mathscr{A} $ be a quasi-abelian category. We have seen in Proposition~\ref{prop:torsiontheory} that $ \mathscr{F} $ is a torsion-free subcategory in $ \mathscr{A}^\mathbb{T} $. As explained in Section~\ref{subsec:categoricalGaloistheory}, this implies that $ \Gamma=(\mathscr{A}^\mathbb{T},\mathscr{F},\mathsf{F},\mathsf{U},\mathscr{E},\mathscr{Z}) $, where $ \mathsf{F} $ and $ \mathsf{U} $ are the reflection and the inclusion of $ \mathscr{F} $, respectively, and $ \mathscr{E} $ and $ \mathscr{Z} $ are the classes of regular epimorphisms in $ \mathscr{A}^\mathbb{T} $ and $ \mathscr{F} $, respectively, is an admissible Galois structure. Furthermore, the following holds: 

\begin{proposition}
	$ \mathscr{F} $ is a Birkhoff subcategory of $ \mathscr{A}^\mathbb{T} $. 
\end{proposition}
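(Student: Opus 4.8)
The plan is to verify directly the two closure conditions in the definition of a Birkhoff subcategory, since the reflectivity of $ \mathscr{F} $ in $ \mathscr{A}^\mathbb{T} $ is already supplied by Proposition~\ref{prop:torsiontheory}. The whole argument reduces to the fact that each evaluation functor $ \mathrm{ev}_T\colon\mathscr{A}^\mathbb{T}\to\mathscr{A} $, $ F\mapsto F(T) $, preserves monomorphisms and regular epimorphisms.

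First I would record this preservation property. Limits and colimits in $ \mathscr{A}^\mathbb{T} $ are computed objectwise, so $ \mathrm{ev}_T $ preserves all limits and colimits; in particular it preserves kernel pairs, hence monomorphisms, and it preserves coequalizers, hence regular epimorphisms. Thus, if $ m\colon F\to G $ is a monomorphism in $ \mathscr{A}^\mathbb{T} $ then every component $ m_T $ is a monomorphism in $ \mathscr{A} $, and if $ q\colon F\to G $ is a regular epimorphism then every component $ q_T $ is a regular epimorphism in $ \mathscr{A} $.

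From here the two closure properties are immediate, using only that in a pointed category a monomorphism into the zero object has zero domain and an epimorphism out of the zero object has zero codomain. For closure under subobjects, let $ m\colon F\to G $ be a monomorphism with $ G\in\mathscr{F} $; for $ T\notin\mathbb{S} $ the component $ m_T\colon F(T)\to G(T)=0 $ is a monomorphism, whence $ F(T)=0 $, so $ F\in\mathscr{F} $. For closure under regular quotients, let $ q\colon F\to G $ be a regular epimorphism with $ F\in\mathscr{F} $; for $ T\notin\mathbb{S} $ the component $ q_T\colon 0=F(T)\to G(T) $ is a regular epimorphism, whence $ G(T)=0 $, so $ G\in\mathscr{F} $. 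Being a replete full reflective subcategory that is closed under subobjects and under regular quotients, $ \mathscr{F} $ is a Birkhoff subcategory of $ \mathscr{A}^\mathbb{T} $.

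Since every step is a direct consequence of objectwise computation in $ \mathscr{A}^\mathbb{T} $, I do not anticipate any real obstacle; the only point deserving a word of justification is the objectwise nature of regular epimorphisms, which here follows from preservation of coequalizers by $ \mathrm{ev}_T $. As an alternative one could appeal to \cite[Proposition 3.1]{janelidze.kelly:1994}, checking that the naturality square \eqref{eq:naturalitysquare} is an objectwise pushout of regular epimorphisms whenever $ f $ is a regular epimorphism by means of the description $ (\eta_F)_T=\mathrm{cok}([F(t)]_{t\in[T]}) $ from the proof of Proposition~\ref{prop:torsiontheory}; the direct route above is however more economical.
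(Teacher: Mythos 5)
Your proof is correct and follows essentially the same route as the paper: both verify closure under subobjects and regular quotients componentwise, using that a monomorphism into $0$ (resp.\ an epimorphism out of $0$) forces the domain (resp.\ codomain) to be zero at each $T\notin\mathbb{S}$. Your explicit justification that the evaluation functors preserve monomorphisms and regular epimorphisms is a welcome extra detail that the paper leaves implicit.
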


\begin{proof}
	We have already seen that $ \mathscr{F} $ is a reflective subcategory of the regular category $ \mathscr{A}^\mathbb{T} $. It remains to show that $ \mathscr{F} $ is closed under subobjects and regular quotients in $ \mathscr{A}^\mathbb{T} $. Let $ \alpha:F\to G $ be a monomorphism in $ \mathscr{A}^\mathbb{T} $ and $ G\in\mathscr{F} $. This implies that $ \alpha_T:F(T)\to 0 $ is a monomorphism for all $ T\notin\mathbb{S} $. Hence $ F(T)=0 $ for all $ T\notin\mathbb{S} $ and $ F\in\mathscr{F} $. For a regular epimorphism $ \beta:G\to H $ in $ \mathscr{A}^\mathbb{T} $ with $ G\in\mathscr{F} $, we have that $ \alpha_T:0\to G(T) $ is an epimorphism for all $ T\notin\mathbb{S} $. Thus, $ G\in\mathscr{F} $.  
\end{proof}

Assuming that the quasi-abelian category $ \mathscr{A} $ is also exact, which amounts to $ \mathscr{A} $ being abelian, we are able to characterize the trivial extensions in $ \mathscr{A}^\mathbb{T} $ with respect to $ \mathscr{F} $. 

\begin{proposition}\label{prop:trivialextensions}
	Let $ \mathscr{A} $ be abelian and $ \alpha:F\to G $ be an extension in $ \mathscr{A}^\mathbb{T} $. Then the following conditions are equivalent: 
	\begin{enumerate}
		\item $ \alpha $ is a trivial extension with respect to $ \mathscr{F} $. 
		\item $ \alpha_T $ and $ (\eta_F)_T $ are jointly monomorphic for all $ T\in\mathbb{T} $. If $ T\notin\mathbb{S} $, this amounts to $ \alpha_T $ being an isomorphism. 
	\end{enumerate}
\end{proposition}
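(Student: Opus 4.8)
The plan is to reduce the statement to the jointly-monomorphic characterization of trivial extensions that is available in exact Mal'tsev categories, and then to read that condition off componentwise.

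First I would observe that since $ \mathscr{A} $ is abelian, the functor category $ \mathscr{A}^\mathbb{T} $ is again abelian, hence in particular an exact Mal'tsev category, and we have just shown that $ \mathscr{F} $ is a Birkhoff subcategory of it. We are therefore exactly in the situation recalled in Section~\ref{subsec:categoricaltheoryofcentralextensions}: by the consequence of \cite[Theorem 5.7]{carboni.kelly.pedicchio:1993} discussed there, an extension $ \alpha:F\to G $ is trivial with respect to $ \mathscr{F} $ if and only if the pair $ (\alpha,\eta_F) $ is jointly monomorphic, i.e. the induced morphism $ F\to G\times\mathsf{UF}(F) $ is a monomorphism in $ \mathscr{A}^\mathbb{T} $. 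This yields one half of the desired characterization, modulo passing to components.

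Next I would use that in the functor category $ \mathscr{A}^\mathbb{T} $ both monomorphisms and finite products are computed pointwise. Consequently $ (\alpha,\eta_F) $ is jointly monomorphic in $ \mathscr{A}^\mathbb{T} $ if and only if, for every $ T\in\mathbb{T} $, the induced morphism $ F(T)\to G(T)\times\mathsf{UF}(F)(T) $ is a monomorphism in $ \mathscr{A} $, that is, $ \alpha_T $ and $ (\eta_F)_T $ are jointly monomorphic. This establishes the equivalence of the first condition with the first sentence of the second.

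It then remains to treat the case $ T\notin\mathbb{S} $. Here I would recall from the proof of Proposition~\ref{prop:torsiontheory} that $ \mathsf{F}(F)(T)=\mathrm{Cok}([F(t)]_{t\in[T]})=0 $, since $ 1_T\in[T] $ forces $ [F(t)]_{t\in[T]} $ to be a split epimorphism (its composite with $ \iota_{1_T} $ is $ F(1_T)=1_{F(T)} $), and hence $ (\eta_F)_T=0 $. For such $ T $ the target product reduces to $ G(T) $, so $ \alpha_T $ and $ (\eta_F)_T $ are jointly monomorphic precisely when $ \alpha_T $ is a monomorphism. Since $ \alpha $ is an extension, i.e. a regular epimorphism in $ \mathscr{A}^\mathbb{T} $, each component $ \alpha_T $ is a regular epimorphism in $ \mathscr{A} $; in the abelian category $ \mathscr{A} $ a morphism that is simultaneously a monomorphism and an epimorphism is an isomorphism. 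Hence for $ T\notin\mathbb{S} $ the jointly-monomorphic condition is equivalent to $ \alpha_T $ being an isomorphism.

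The key point to get right is the appeal to the jointly-monomorphic criterion, which is precisely where exactness together with the Mal'tsev property of $ \mathscr{A}^\mathbb{T} $ is used; this is guaranteed by $ \mathscr{A} $ being abelian, and is the reason this characterization of \emph{trivial} extensions is stated only in the abelian case rather than the merely quasi-abelian one. The remaining steps are the routine observations that monomorphisms, products, and regular epimorphisms are all computed pointwise in a functor category into an abelian category.
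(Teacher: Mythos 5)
Your argument is correct and follows essentially the same route as the paper's own proof: reduce to the jointly-monomorphic criterion for trivial extensions valid in the exact Mal'tsev (here abelian) setting with a Birkhoff subcategory, pass to components using that monomorphisms and products are computed pointwise, and for $T\notin\mathbb{S}$ combine $(\eta_F)_T=0$ with the fact that $\alpha_T$ is a regular epimorphism to conclude it is an isomorphism. The extra detail you supply on why $\mathsf{F}(F)(T)=0$ for $T\notin\mathbb{S}$ is consistent with the proof of Proposition~\ref{prop:torsiontheory} and is a welcome clarification, but does not change the substance of the argument.
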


\begin{proof}
	Since $ \mathscr{A}^\mathbb{T} $ is abelian, $ \alpha $ is a trivial extension if and only if $ \alpha $ and $ \eta_F $ are jointly monomorphic, see Section~\ref{subsec:categoricaltheoryofcentralextensions}. This is equivalent to $ \alpha_T $ and $ (\eta_F)_T $ being jointly monomorphic for all $ T\in\mathbb{T} $. If $ T\notin\mathbb{S} $, then $ (\eta_F)_T=0 $ and $ \alpha_T $ is a monomorphism. Since $ \alpha_T $ is a regular epimorphism by assumption, this implies that $ \alpha_T $ is an isomorphism.   
\end{proof}

In \cite{gran.ngahangaha:2013}, the authors show in particular that any quasi-abelian category $ \mathscr{C} $ is descent-exact \cite{duckerts-antoine:2017}, meaning that the classes of effective descent morphisms and of regular epimorphisms in $ \mathscr{C} $ coincide, i.e., for a morphism $ f:A\to B $ in $ \mathscr{C} $, the functor $ f^*:\mathscr{C}\downarrow B\to\mathscr{C}\downarrow A $ is monadic if and only if $ f $ is a regular epimorphism. More generally, they show that this property still holds true in any normal category $ \mathscr{C} $ \cite{janelidze:2010}, i.e. a pointed regular category in which any regular epimorphism is the cokernel of its kernel, such that (epimorphism, normal monomorphism)-factorizations exist. Hence \cite[2.6. Proposition]{janelidze.tholen:1994} implies that, for any regular epimorphism $ f:A\to B $ in $ \mathscr{C} $, the functor $ f^*:\mathrm{RegEpi}(B)\to\mathrm{RegEpi}(A) $ is monadic, where $ \mathrm{RegEpi}(B) $ denotes the full subcategory of the slice category $ \mathscr{C}\downarrow B $ whose objects are given by the regular epimorphisms in $ \mathscr{C} $ with codomain $ B $. We are now able to prove a characterization of the central extensions in $ \mathscr{A}^\mathbb{T} $ with respect to $ \mathscr{F} $ which is very similar to the characterization of the trivial extensions obtained in Proposition~\ref{prop:trivialextensions}. 

\begin{proposition}\label{prop:centralextensions}
	Let $ \mathscr{A} $ be a quasi-abelian category. Furthermore, let $ \alpha:F\to G $ be an extension in $ \mathscr{A}^\mathbb{T} $. Then the following conditions are equivalent: 
	\begin{enumerate}
		\item \label{it:normalextensionA^T} $ \alpha $ is a normal extension with respect to $ \mathscr{F} $.  
		\item \label{it:centralextensionA^T} $ \alpha $ is a central extension with respect to $ \mathscr{F} $. 
		\item \label{it:Ker(alpha)inF} $ \mathrm{Ker}(\alpha) $ lies in $ \mathscr{F} $. 
		\item \label{it:alpha_Tiso} $ \alpha_T $ is an isomorphism for all $ T\notin\mathbb{S} $. 
	\end{enumerate}
\end{proposition}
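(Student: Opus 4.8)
The plan is to prove the equivalence via the cycle $\eqref{it:normalextensionA^T}\Rightarrow\eqref{it:centralextensionA^T}\Rightarrow\eqref{it:Ker(alpha)inF}\Rightarrow\eqref{it:alpha_Tiso}\Rightarrow\eqref{it:normalextensionA^T}$, leaning as much as possible on the abstract Proposition~\ref{prop:centralextensionsprotoadditive}. The first thing I would check is that the hypotheses of that proposition are satisfied in the present setting. Since $\mathscr{A}$ is quasi-abelian, it is homological, hence pointed and protomodular; and functor categories inherit these properties pointwise, so $\mathscr{A}^\mathbb{T}$ is pointed protomodular. From Proposition~\ref{prop:torsiontheory} we know $\mathscr{F}$ is the torsion-free part of a torsion theory, so its reflection $\mathsf{F}$ is a reflection with fully faithful inclusion $\mathsf{U}$, and we already observed in the discussion after Proposition~\ref{prop:torsiontheory} that $\mathsf{F}$ is protoadditive. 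Finally, by the results of \cite{gran.ngahangaha:2013} recalled just above, every regular epimorphism in the quasi-abelian (hence normal) category $\mathscr{A}^\mathbb{T}$ is a monadic (effective descent) extension, so every morphism in $\mathscr{E}$ is a monadic extension. Thus Proposition~\ref{prop:centralextensionsprotoadditive} applies verbatim and immediately gives the equivalence of \eqref{it:normalextensionA^T}, \eqref{it:centralextensionA^T} and \eqref{it:Ker(alpha)inF}.

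It then remains to prove $\eqref{it:Ker(alpha)inF}\Leftrightarrow\eqref{it:alpha_Tiso}$, which is the genuinely new, computational part. Kernels in $\mathscr{A}^\mathbb{T}$ are computed pointwise, so $\mathrm{Ker}(\alpha)(T)=\mathrm{Ker}(\alpha_T)$ for every $T\in\mathbb{T}$. By definition of $\mathscr{F}$, the object $\mathrm{Ker}(\alpha)$ lies in $\mathscr{F}$ if and only if $\mathrm{Ker}(\alpha_T)=0$ for all $T\notin\mathbb{S}$, i.e. if and only if $\alpha_T$ is a monomorphism for all $T\notin\mathbb{S}$. Now for $T\notin\mathbb{S}$, the component $\alpha_T$ is a regular epimorphism in $\mathscr{A}$ because $\alpha$ is an extension (a regular epimorphism in $\mathscr{A}^\mathbb{T}$, and these are detected pointwise in a functor category over a regular $\mathscr{A}$). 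A morphism in a (pointed, quasi-abelian) category that is simultaneously a regular epimorphism and a monomorphism is an isomorphism: its kernel is $0$, and being a regular epimorphism it is the cokernel of its kernel, hence the cokernel of $0$, which is an isomorphism. This yields $\eqref{it:Ker(alpha)inF}\Rightarrow\eqref{it:alpha_Tiso}$, and the converse is trivial since an isomorphism has zero kernel.

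I expect the main subtlety, rather than deep obstacle, to lie in carefully justifying the pointwise computation of regular epimorphisms and kernels, together with the $(\text{regular epi}) + (\text{mono}) \Rightarrow \text{iso}$ step, in the merely quasi-abelian (non-abelian) setting. In an abelian category this is automatic, but here one must invoke that $\mathscr{A}$ is normal, so that a regular epimorphism is the cokernel of its kernel; this is exactly the property used in the quoted results of \cite{gran.ngahangaha:2013}. One should also confirm that regular epimorphisms in $\mathscr{A}^\mathbb{T}$ are pointwise regular epimorphisms: this holds because $\mathscr{A}^\mathbb{T}$ is regular with limits and colimits computed pointwise, so that coequalizers of kernel pairs, and hence regular-epi/mono factorizations, are formed componentwise. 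Once these facts are in place the argument is short. The structure I would present is: first verify the hypotheses of Proposition~\ref{prop:centralextensionsprotoadditive} and deduce the equivalence of the first three conditions, then prove $\eqref{it:Ker(alpha)inF}\Leftrightarrow\eqref{it:alpha_Tiso}$ by the pointwise kernel computation and the regular-epi-mono argument above.
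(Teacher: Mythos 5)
Your proposal is correct and follows essentially the same route as the paper: the equivalence of the first three conditions is delegated to Proposition~\ref{prop:centralextensionsprotoadditive} (whose hypotheses you verify a bit more explicitly than the paper does), and the equivalence with the fourth is obtained by the pointwise computation of kernels together with the observation that a regular epimorphism with zero kernel is an isomorphism. The only cosmetic difference is that the paper invokes protomodularity to pass from $\mathrm{Ker}(\alpha_T)=0$ to $\alpha_T$ being a monomorphism, whereas you use normality (a regular epimorphism is the cokernel of its kernel); both are available in the quasi-abelian setting.
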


\begin{proof}
	The equivalence of \ref{it:normalextensionA^T}, \ref{it:centralextensionA^T} and \ref{it:Ker(alpha)inF} is clear by Proposition~\ref{prop:centralextensionsprotoadditive}. Moreover, we have that $ \mathrm{Ker}(\alpha) $ lies in $ \mathscr{F} $ if and only if $ \mathrm{Ker}(\alpha_T)=0 $ for all $ T\notin\mathbb{S} $. Since $ \mathscr{A}^\mathbb{T} $ is protomodular, this is equivalent to $ \alpha_T $ being a monomorphism. By assumption, $ \alpha_T $ is a regular epimorphism. Thus, this is equivalent to $ \alpha_T $ being an isomorphism for all $ T\notin\mathbb{S} $. 
\end{proof}

Alternatively, one could have used the results in \cite{gran.janelidze:2009} to prove the above proposition. This result is also included in Proposition~\ref{prop:highercentralextensionsprotoadditive}. 

We illustrate Propositions~\ref{prop:trivialextensions} and \ref{prop:centralextensions} in two concrete situations. In particular, we give an example of a central extension which is not trivial. Then \cite[Proposition 4.10]{gran.rossi:2007} implies that $ (\mathscr{T},\mathscr{F}) $ is not quasi-hereditary, i.e., $ \mathscr{T} $ is not closed under regular subobjects. We saw before that $ \mathscr{T} $ is closed under protosplit subobjects.

\begin{example}\label{ex:arrowcategory}
	Let $ \mathbb{T} $ be the "walking arrow" category with exactly two objects and one non-trivial morphism:
	\begin{equation*}
		\begin{tikzcd}
			1 \arrow[r] &0
		\end{tikzcd}
	\end{equation*}
	Let $ 0 $ be the only object of $ \mathbb{S} $. Note that $ 1 $ ($ 0 $) does not mean here that $ 1 $ ($ 0 $) is a terminal (initial) object. Hence $ \mathscr{A}^\mathbb{T} $ corresponds to the arrow category $ \mathrm{Arr}(\mathscr{A}) $ of $ \mathscr{A} $ and $ \mathscr{F} $ corresponds to $ \mathscr{A} $. $ \mathsf{U} $ maps an object $ A $ of $ \mathscr{A} $ to the morphism $ 0\rightarrow A $, and $ \mathsf{F} $ maps a morphism $ a:A_1\to A_0 $ to $ \mathrm{Cok}(a) $. If $ \mathscr{A} $ is abelian, an extension $ (f_0,f_1) $ in $ \mathrm{Arr}(A) $ from $ a $ to $ b $ as depicted in the diagram
	\begin{equation*}
		\begin{tikzcd}
			A_1 \arrow[d, "a"'] \arrow[r, "f_1"] &B_1 \arrow[d,"b"]\\
			A_0 \arrow[r,"f_0"'] &B_0
		\end{tikzcd}
	\end{equation*}
	is trivial if and only if $ f_1 $ is an isomorphism, and $ f_0 $ and $ \mathrm{cok}(a) $ are jointly monomorphic. If $ \mathscr{A} $ is quasi-abelian, it is central if and only if $ f_1 $ is an isomorphism. The diagram
	\begin{equation*}
		\begin{tikzcd}
			\mathbb{Z} \arrow[r, equal] \arrow[d, equal] &\mathbb{Z} \arrow[d] &\\
			\mathbb{Z} \arrow[r] &0
		\end{tikzcd}
	\end{equation*}
	in the category $ \mathrm{Ab} $ of abelian groups depicts an extension which is central but not trivial.

	It is well-known that the categories $ \mathrm{Arr}(\mathscr{A}) $, $ \mathrm{RG}(\mathscr{A}) $ of reflexive graphs, $ \mathrm{Cat}(\mathscr{A}) $ of internal categories and $ \mathrm{Grpd}(\mathscr{A}) $ of internal groupoids in $ \mathscr{A} $ are all equivalent as soon as $ \mathscr{A} $ is additive and has kernels. Given a morphism $ a:A_1\to A_0 $ in $ \mathscr{A} $, the corresponding reflexive graph is given by 
	\begin{equation*}
		\begin{tikzcd}
			A_1\oplus A_0 \arrow[r, shift left=3, "\pi_2"] \arrow[r, shift right=3, "{[}a{,}1_{A_0}{]}"'] &A_0, \arrow[l, "\iota_2"{description}]
		\end{tikzcd}
	\end{equation*}
	where $ [a,1_{A_0}] $ is the morphism induced by the universal property of the coproduct $ A_1\oplus A_0 $, and this reflexive graph has a unique internal groupoid structure. Conversely, given an internal groupoid in $ \mathscr{A} $ with underlying reflexive graph 
	\begin{equation}\label{eq:reflexivegraph}
		\begin{tikzcd}
			C_1 \arrow[r, shift left=3, "d"] \arrow[r, shift right=3, "c"'] &C_0, \arrow[l, "e"{description}]
		\end{tikzcd}
	\end{equation}
	the corresponding morphism is given by $ c\circ \mathrm{ker}(d):\mathrm{Ker}(d)\to C_0 $. Under this equivalence, $ \mathscr{F} $ corresponds to the discrete internal groupoids, i.e. those internal groupoids where $ d $ as in Diagram~\eqref{eq:reflexivegraph} is an isomorphism. In \cite{gran:1999, gran:2001}, it is shown that, if $ \mathscr{A} $ is an exact Mal'tsev category, then $ \mathrm{Grpd}(\mathscr{A}) $ is an exact Mal'tsev category. Furthermore, an extension
	\begin{equation*}
		\begin{tikzcd}
			C_1 \arrow[d, "f_1"'] \arrow[r, shift left=1, "d"] \arrow[r, shift right=1, "c"'] &C_0 \arrow[d, "f_0"]\\
			D_1 \arrow[r, shift left=1, "d"] \arrow[r, shift right=1, "c"'] &D_0
		\end{tikzcd}
	\end{equation*}
	in $ \mathrm{Grpd}(\mathscr{A}) $, i.e., $ f_0 $ and $ f_1 $ are regular epimorphisms in $ \mathscr{A} $, is a central extension with respect to the Birkhoff subcategory $ \mathrm{Discr}(\mathscr{A}) $ given by all discrete internal groupoids if and only if it is a discrete fibration, i.e., either of the above commutative squares is a pullback. In the pointed protomodular context, this is equivalent to the condition that the induced morphism between the kernels of the "domain" morphisms of the internal groupoids is an isomorphism. 
\end{example}

\begin{example}\label{ex:doublearrowcategory}
	Let $ \mathbb{T} $ be the category as in the diagram
	\begin{equation*}
		\begin{tikzcd}
			\binom{1}{1} \arrow[dr] \arrow[d] \arrow[r] &\binom{1}{0} \arrow[d]\\
			\binom{0}{1} \arrow[r] &\binom{0}{0},
		\end{tikzcd}
	\end{equation*}
	where we depicted all arrows except the identities, and $ \binom{0}{1} $ be the only object not in $ \mathbb{S} $. Then $ \mathscr{A}^\mathbb{T} $ corresponds to the double arrow category $ \mathrm{Arr}^2(\mathscr{A}) $ of $ \mathscr{A} $, $ \mathscr{F} $ corresponds to the category whose objects are given by pairs of composable morphisms $ f:A\to B $ and $ g:B\to C $ whose composite is $ 0 $, $ \mathsf{U} $ maps a corresponding pair $ (f,g) $ to the commutative square 
	\begin{equation*}
		\begin{tikzcd}
			A \arrow[d] \arrow[r, "f"] &B \arrow[d, "g"]\\
			0 \arrow[r] &C,
		\end{tikzcd}
	\end{equation*}
	and $ \mathsf{F} $ maps a commutative square
	\begin{equation*}
		\begin{tikzcd}
			A^1_1 \arrow[d, "a_1"'] \arrow[r, "a^1"] &A^1_0 \arrow[d, "a_0"]\\
			A^0_1 \arrow[r, "a^0"'] &A^0_0
		\end{tikzcd}
	\end{equation*} 
	to 
	\begin{equation*}
		\begin{tikzcd}
			A^1_1 \arrow[r, "a^1"] &A^1_0 \arrow[r, "\mathrm{cok}(a^0)a_0"] &\mathrm{Cok}(a^0). 
		\end{tikzcd}
	\end{equation*}
	If $ \mathscr{A} $ is abelian, an extension $ (f^0_0,f^0_1,f^1_0,f^1_1) $ as depicted in the diagram 
	\begin{equation*}
		\begin{tikzcd}
			A^1_1 \arrow[rr, near start, "f^1_1"] \arrow[dd,near start, "a_1"'] \arrow[dr, "a^1"] && B^1_1 \arrow[dr, "b^1"] \arrow[dd, near start, "b_1"] &\\
			&A^1_0 \arrow[dd, near end, "a_0"'] \arrow[rr, near start, "f^1_0"] &&B^1_0 \arrow[dd, near end, "b_0"]\\
			A^0_1 \arrow[rr, near start, "f^0_1"] \arrow[dr, "a^1"'] && B^0_1 \arrow[dr, "b^0"]\\
			&A^0_0 \arrow[rr, near start, "f^0_0"'] && B^0_0
		\end{tikzcd}
	\end{equation*}
	is trivial if and only if $ f^0_1 $ is an isomorphism, and $ f^0_0 $ and $ \mathrm{cok}(a^1) $ are jointly monomorphic. If $ \mathscr{A} $ is quasi-abelian, it is central if and only if $ f^0_1 $ is an isomorphism. 
\end{example} 

\section{Higher central extensions and generalized Hopf formulae in $ \mathscr{A}^\mathbb{T} $ with respect to $ \mathscr{F} $}\label{sec:highercentralextensionsinA^T}

Given a Galois structure $ \Gamma=(\mathscr{C},\mathscr{F},\mathsf{F},\mathsf{U},\mathscr{E},\mathscr{Z}) $, we denote by $ \mathrm{Ext}(\mathscr{C}) $ the full subcategory of the arrow category $ \mathrm{Arr}(\mathscr{C}) $ whose objects are given by the morphisms in $ \mathscr{E} $. It turns out that, in many cases, the full subcategory $ \mathrm{CExt}(\mathscr{C}) $ of $ \mathrm{Ext}(\mathscr{C}) $ whose objects are given by the central extensions is reflective in $ \mathrm{Ext}(\mathscr{C}) $, see e.g. \cite{janelidze.kelly:1997}. In \cite{everaert.gran.vanderlinden:2008}, the authors introduce higher central extensions to develop non-abelian homological algebra, see also \cite{everaert:2010, everaert.gran:2014, everaert:2014}. In this section, we study the higher central extensions in $ \mathscr{A}^\mathbb{T} $ with respect to $ \mathscr{F} $ and the associated generalized Hopf formulae. We start by recalling the theoretical background of this theory. 

\subsection{Higher central extensions and generalized Hopf formulae}\label{subsec:highercentralextensions}

This section is mostly based on the article \cite{duckerts-antoine:2017} since its presentation and results serve our purposes best. 

In \cite{duckerts-antoine:2017}, the author considers a \textbf{closed Galois structure} $ \Gamma=(\mathscr{C},\mathscr{F},\mathsf{F},\mathsf{U},\mathscr{E},\mathscr{Z}) $, i.e. a Galois structure as in Section~\ref{subsec:categoricalGaloistheory} such that the counit $ \varepsilon $ of the adjunction is an isomorphism and each component $ \eta_C $ of the unit of the adjunction lies in $ \mathscr{E} $. In this case, one can assume that $ \mathscr{F} $ is a full subcategory of $ \mathscr{C} $, $ \mathsf{U} $ is the inclusion and $ \varepsilon_C=1_C $ for all $ C\in\mathscr{F} $. In the following, it will not be important if $ \Gamma $ is admissible or not. The notions of trivial, normal and central extensions are defined as in Section~\ref{subsec:categoricalGaloistheory}. 

Furthermore, the author considers a pointed protomodular catgory $ \mathscr{C} $ and a class of extensions $ \mathscr{E} $ in $ \mathscr{C} $ that satisfies the following conditions: 
\begin{itemize}
	\item [(E1)] \label{it:E1} $ \mathscr{E} $ contains all isomorphisms in $ \mathscr{C} $.
	\item [(E2)] \label{it:E2} Pullbacks of morphisms in $ \mathscr{E} $ exist in $ \mathscr{C} $ and lie in $ \mathscr{E} $. 
	\item [(E3)] \label{it:E3} $ \mathscr{E} $ is closed under composition. 
	\item [(E4)] \label{it:E4} If the composite $ gf $ lies in $ \mathscr{E} $, then $ g $ lies in $ \mathscr{E} $. 
	\item [(E5)] \label{it:E5}For a commutative diagram 
	\begin{equation*}
		\begin{tikzcd}
			\mathrm{Ker}(a) \arrow[r, "\mathrm{ker}(a)"] \arrow[d, "k"'] &A_1 \arrow[r, "a"] \arrow[d, "f"'] &A_0 \arrow[d, equal]\\
			\mathrm{Ker}(b) \arrow[r, "\mathrm{ker}(b)"'] &B_1 \arrow[r, "b"] &A_0,
		\end{tikzcd}
	\end{equation*}
	the condition that $ k $ and $ a $ lie in $ \mathscr{E} $ implies that $ f $ lies in $ \mathscr{E} $. 
	\item [(M)] \label{it:M} Every morphism in $ \mathscr{E} $ is a monadic extension. 
\end{itemize}
The above conditions imply that any morphism in $ \mathscr{E} $ is a regular epimorphism or, equivalently here, a normal epimorphism. 

If $ \mathscr{C} $ is a homological category such that the classes of effective descent morphisms and of regular epimorphisms coincide, i.e., $ \mathscr{C} $ is descent-exact, and $ \mathscr{E} $ is the class of regular epimorphisms in $ \mathscr{C} $, then all the above conditions are fulfilled.

Given such a pair $ (\mathscr{C},\mathscr{E}) $, we set $ \mathrm{Ext}(\mathscr{C}):=\mathrm{Ext}_\mathscr{E}(\mathscr{C}) $, where $ \mathrm{Ext}_\mathscr{E}(\mathscr{C}) $ denotes the full subcategory of the arrow category $ \mathrm{Arr}(\mathscr{C}) $ of $ \mathscr{C} $ whose objects are given by the morphisms in $ \mathscr{E} $. A \textbf{double extension} is a morphism $ (f_0,f_1):a\to b $ in $ \mathrm{Ext}(\mathscr{C}) $ such that all morphisms in the diagram
\begin{equation*}
	\begin{tikzcd}
		A_1 \arrow[rrd, bend left=30, "f_1"] \arrow[ddr, bend right=30, "a"'] \arrow[rd, dotted]&&\\
		&A_0\times_{B_0}B_1 \arrow[r, "p_2"] \arrow[d, "p_1"'] &B_1 \arrow[d, "b"]\\
		&A_0 \arrow[r, "f_0"'] &B_0,
	\end{tikzcd}
\end{equation*}
where the inner diagram is a pullback, lie in $ \mathscr{E} $. The class of double extensions is denoted by $ \mathscr{E}^1 $. In \cite[Theorem 2.2.]{duckerts-antoine:2017}, it is shown that if $ (\mathscr{C},\mathscr{E}) $ satisfies all the conditions above, i.e., $ \mathscr{C} $ is pointed protomodular and $ \mathscr{E} $ satisfies the conditions (E1) to (E5) and (M), then $ (\mathrm{Ext}(\mathscr{C}),\mathscr{E}^1) $ satisfies the same conditions. Recursively, one gets, for every $ n\geq 1 $, a class $ \mathscr{E}^n:=(\mathscr{E}^{n-1})^1 $ of so-called \textbf{$ (n+1) $-fold extensions}, satisfying the properties (E1) to (E5) and (M) in the pointed protomodular category $ \mathrm{Ext}^n(\mathscr{C}):=\mathrm{Ext}_{\mathscr{E}^{n-1}}(\mathrm{Ext}^{n-1}(\mathscr{C})) $, where we set $ \mathscr{C}^0:=\mathscr{C} $ and $ \mathscr{E}^0:=\mathscr{E} $.

Given a closed Galois structure $ \Gamma=(\mathscr{C},\mathscr{F},\mathsf{F},\mathsf{U},\mathscr{E},\mathscr{Z}) $, we denote by $ \mathrm{NExt}(\mathscr{C}) $ the full subcategory of $ \mathrm{Ext}(\mathscr{C}) $ whose objects are given by the normal extensions with respect to $ \Gamma $. In \cite[Theorem 2.6.]{duckerts-antoine:2017}, it is assumed that $ \mathscr{C} $ is pointed protomodular, $ \mathscr{E} $ satisfies the conditions (E4), (E5) and (M), and the reflection $ \mathsf{F}:\mathscr{C}\to\mathscr{F} $ preserves pullbacks of the form
\begin{equation}\tag{A}
	\begin{tikzcd}
		A \arrow[d] \arrow[r] &B \arrow[d, "g"]\\
		D \arrow[r, "h"'] &C,
	\end{tikzcd}
\end{equation}
where $ g\in\mathscr{E} $ and $ h\in \mathrm{Split(\mathscr{E})} $. Here $ \mathrm{Split}(\mathscr{E}) $ denotes the class of morphisms which are split epimorphisms and lie in $ \mathscr{E} $. Then it is shown that there exists a closed Galois structure $ \Gamma_1=(\mathrm{Ext}(\mathscr{C}), \mathrm{NExt}(\mathscr{C}), \mathsf{F}_1, \mathsf{U}_1, \mathscr{E}^1, \mathscr{Z}^1) $ such that $ \mathrm{Ext}(\mathscr{C}) $ is pointed protomodular, $ \mathscr{E}^1 $ satisfies (E4), (E5) and (M), and $ \mathsf{F}_1 $ preserves pullbacks of form (A), where $ g $ lies in $ \mathscr{E}^1 $ and $ h $ lies in $ \mathrm{Split}(\mathscr{E}^1) $. Recursively, one gets, for every $ n\geq 1 $, a closed Galois structure $ \Gamma_n=(\mathrm{Ext}^n(\mathscr{C}),\mathrm{NExt}^n(\mathscr{C}),\mathsf{F}_n,\mathsf{U}_n,\mathscr{E}^n,\mathscr{Z}^n) $, where $ \mathrm{NExt}^n(\mathscr{C}) $ is the full subcategory of $ \mathrm{Ext}^n(\mathscr{C}) $ whose objects are given by the $ n $-fold extensions which are normal with respect to $ \Gamma_{n-1} $, where we set $ \Gamma_0:=\Gamma $. We denote by $ \eta^n $ the unit of the adjunction in $ \Gamma_n $. 

We recall that a semi-abelian category in the sense of \cite{janelidze.marki.tholen:2002} is a category which is finitely complete, finitely cocomplete, pointed, exact and protomodular. In \cite[Lemma 4.4.]{everaert.gran.vanderlinden:2008}, it is in particular shown that any semi-abelian category $ \mathscr{C} $ with Birkhoff subcategory $ \mathscr{F} $, and $ \mathscr{E} $ and $ \mathscr{Z} $ being the classes of regular epimorphisms in $ \mathscr{C} $ and $ \mathscr{F} $, respectively, yields a Galois structure $ \Gamma=(\mathscr{C},\mathscr{F},\mathsf{F},\mathsf{U},\mathscr{E},\mathscr{Z}) $ that satisfies all the conditions required above. Moreover, \cite[Proposition 4.5]{everaert.gran.vanderlinden:2008} shows that $ \mathrm{NExt}^n(\mathscr{C})=\mathrm{CExt}^n(\mathscr{C}) $, where $ \mathrm{CExt}^n(\mathscr{C}) $ denotes the full subcategory of $ \mathrm{Ext}^n(\mathscr{C}) $ whose objects are given by the $ n $-fold extensions which are central with respect to $ \Gamma_{n-1} $, see also \cite{everaert:2014}. 

It is easy to see that a protoadditive functor into a protomodular category preserves pullbacks of form (A), where $ h $ is a split epimorphism. In the following, we will get a similar result to Proposition~\ref{prop:centralextensionsprotoadditive} for higher extensions. 

Given an object $ A\in\mathscr{C} $, we denote by $ [A] $ the kernel of $ \eta_A:A\to\mathsf{F}(A) $. For $ n\geq 1 $ and $ A\in \mathrm{Ext}^n(\mathscr{C}) $, we denote by $ [A]^n $ the kernel of $ \eta^n_A:A\to\mathsf{F}^n(A) $. This defines a functor $ [-]^n:\mathrm{Ext}^n(\mathscr{C})\to\mathrm{Ext}^n(\mathscr{C}) $. Let us consider the natural numbers by their standard (von Neumann) construction, and put $ 0:=\emptyset $ and $ n:=\{0,\ldots,n-1\} $ for $ n\geq 1 $. Let $ n\geq 0 $. We denote by $ \mathcal{P}(n) $ the partial order given by the power set of $ n $ and view it is a category. Let us consider the functor category $ \mathscr{C}^{\mathcal{P}(n)^{\mathrm{op}}} $. For an object $ A $ in $ \mathscr{C}^{\mathcal{P}(n)^{\mathrm{op}}} $, we set, for any $ S\subseteq T\subseteq n $, $ A_S:=A(S) $ and $ a^T_S:=A(S\subseteq T) $, which is an arrow in $ \mathscr{C} $ from $ A_T $ to $ A_S $. We define $ a_i:=a^{n}_{n\setminus\{i\}} $ for any $ 0\leq i\leq n-1 $. We write $ (A_S)_{S\subseteq n} $ synonymously for $ A $. For $ n\geq 1 $, the functor $ \delta_{n-1}:\mathscr{C}^{\mathcal{P}(n)^{\mathrm{op}}}\to \mathrm{Arr}(\mathscr{C}^{\mathcal{P}(n-1)^{\mathrm{op}}}) $ which maps a morphism $ f:A\to B $ in $ \mathscr{C}^{\mathcal{P}(n)^{\mathrm{op}}} $ to the commutative square 
\begin{equation*}
	\begin{tikzcd}
		(A_{S\cup\{n-1\}})_{S\subseteq n-1} \arrow[d, "(a^{S\cup \{n-1\}}_{S})_{S\subseteq n-1}"'] \arrow[rrr, "(f_{S\cup\{n-1\}})_{S\subseteq n-1}"] &&&(B_{S\cup\{n-1\}})_{S\subseteq n-1} \arrow[d, "(b^{S\cup \{n-1\}}_{S})_{S\subseteq n-1}"]\\
		(A_{S})_{S\subseteq n-1} \arrow[rrr, "(f_{S})_{S\subseteq n-1}"'] &&&(B_{S})_{S\subseteq n-1}\\
	\end{tikzcd}
\end{equation*} 
defines an isomorphism of categories. Hence the composite $ \phi:=\mathrm{Arr}^{n-1}(\delta_0)\cdots \mathrm{Arr}(\delta_{n-2})\delta_{n-1} $ yields an isomorphism between $ \mathscr{C}^{\mathcal{P}(n)^{\mathrm{op}}} $ and $ \mathrm{Arr}^n(\mathscr{C}) $. We set $ \underline{\mathrm{Ext}}^n(\mathscr{C}):=\phi^{-1}(\mathrm{Ext}^n(\mathscr{C})) $. Even though $ \phi $ is just one possible choice of isomorphism between $ \mathscr{C}^{\mathcal{P}(n)^{\mathrm{op}}} $ and $ \mathrm{Arr}^n(\mathscr{C}) $, it is shown in \cite[Proposition 1.16.]{everaert.goedecke.vanderlinden:2012} that $ \underline{\mathrm{Ext}}^n(\mathscr{C}) $ is the full subcategory of $ \mathscr{C}^{\mathcal{P}(n)^{\mathrm{op}}} $ whose objects are given by the functors $ \mathcal{P}(n)^{\mathrm{op}}\to\mathscr{C} $ such that the limit $ \mathrm{lim}_{J\subsetneq I} A_J $ exists for all $ \emptyset\neq I\subseteq n $ and the induced morphism $ A_I\to \mathrm{lim}_{J\subsetneq I} A_J $ lies in $ \mathscr{E} $. In the following, we will not distinguish between $ \mathrm{Ext}^n(\mathscr{C}) $ and $ \underline{\mathrm{Ext}}^n(\mathscr{C}) $ and assume the application of $ \phi $ implicitly. We define $ \iota^n:\mathscr{C}\to\mathrm{Ext}^n(\mathscr{C}) $ to be the functor which maps an object $ A $ in $ \mathscr{C} $ to the functor which maps $ n $ to $ A $, and $ S $ to $ 0 $ for any $ S\subsetneq n $. In \cite[Theorem 2.15.]{duckerts-antoine:2017}, it is shown that the functor $ [-]^n:\mathrm{Ext}^n(\mathscr{C})\to\mathrm{Ext}^n(\mathscr{C}) $ factors through $ \iota^n $, i.e., there exists a functor $ [-]_n:\mathrm{Ext}^n(\mathscr{C})\to\mathscr{C} $ such that $ [A]^n=\iota^n([A]_n) $ for all $ A\in\mathrm{Ext}^n(\mathscr{C}) $. 

\begin{example}\label{ex:doublecentralextensionsofgroups}
	Let us consider the Galois structure $ \Gamma=(\mathrm{Grp},\mathrm{Ab},\mathsf{Ab},\mathsf{U},\mathscr{E},\mathscr{Z}) $, where $ \mathsf{Ab}:\mathrm{Grp}\to\mathrm{Ab} $ is the abelianization functor, and $ \mathscr{E} $ and $ \mathscr{Z} $ are the classes of regular epimorphisms, i.e. surjective group homomorphisms, in $ \mathrm{Grp} $ and $ \mathrm{Ab} $, respectively. It is clear that $ \mathrm{Grp} $ is a semi-abelian category with Birkhoff subcategory $ \mathrm{Ab} $. The centralization of an extension $ f:A\to B $ is given by the induced morphism from the quotient of $ A $ by its normal subgroup $ [\mathrm{Ker}(f),A] $ to $ B $: 
	\begin{equation*}
		\begin{tikzcd}
			A \arrow[dr] \arrow[rr, "f"] &&B\\
			&A/[\mathrm{Ker}(f),A] \arrow[ur, dotted]
		\end{tikzcd}
	\end{equation*}
	In \cite{janelidze:1991}, it is shown that a double extension, i.e. a commutative square
	\begin{equation*}
		\begin{tikzcd}
			A_1 \arrow[d, "a"'] \arrow[r, "f_1"] &B_1 \arrow[d,"b"]\\
			A_0 \arrow[r,"f_0"'] &B_0
		\end{tikzcd}
	\end{equation*}
	of surjective group homomorphisms such that the induced morphism from $ A_1 $ to the pullback of $ f_0 $ along $ b $ is surjective as well, is central if and only if the conditions $ [\mathrm{Ker}(a)\cap\mathrm{Ker}(f_1),A]=0 $ and $ [\mathrm{Ker}(a),\mathrm{Ker}(f_1)]=0 $ hold. The centralisation of a double extension is given by the induced square 
	\begin{equation*}
		\begin{tikzcd}
			A_1/C \arrow[d, dotted] \arrow[r, dotted] &B_1 \arrow[d,"b"]\\
			A_0 \arrow[r,"f_0"'] &B_0,
		\end{tikzcd}
	\end{equation*}
	where $ C $ denotes the product $ [\mathrm{Ker}(a)\cap\mathrm{Ker}(f_1),A]\cdot[\mathrm{Ker}(a),\mathrm{Ker}(f_1)] $ of the groups $ [\mathrm{Ker}(a)\cap\mathrm{Ker}(f_1),A] $ and $ [\mathrm{Ker}(a),\mathrm{Ker}(f_1)] $. 
\end{example}

An object $ P $ in $ \mathscr{C} $ is called \textbf{$ \mathscr{E} $-projective} if in any diagram 
\begin{equation*}
	\begin{tikzcd}
		&P \arrow[d, "p"] \arrow[dl, dotted, "p'"']\\
		A \arrow[r, "f"'] &B,
	\end{tikzcd}
\end{equation*}
where $ f $ is an extension, there exists a morphism $ p' $ such that $ fp'=p $. If $ \mathscr{E} $ is the class of regular epimorphisms, we speak of regular projective objects. An \textbf{$ \mathscr{E} $-projective presentation} of an object $ A $ in $ \mathscr{C} $ is given by an extension $ p:P\to A $ where $ P $ is an $ \mathscr{E} $-projective object. We say that $ \mathscr{C} $ \textbf{has enough $ \mathscr{E} $-projective objects} if any object in $ \mathscr{C} $ admits an $ \mathscr{E} $-projective presentation. An \textbf{$ n $-fold $ \mathscr{E} $-projective presentation} of an object $ A $ in $ \mathscr{C} $ is given by an object $ P $ in $ \mathrm{Ext}^n(\mathscr{C}) $ with $ P_0=A $ and $ P_S $ an $ \mathscr{E} $-projective object for all $ \emptyset\neq S\subseteq n $. In \cite[Lemma 2.13.]{duckerts-antoine:2017}, it is shown that, if $ \mathscr{C} $ has enough $ \mathscr{E} $-projective objects, then any $ A $ in $ \mathscr{C} $ admits at least one $ n $-fold $ \mathscr{E} $-projective presentation. In \cite[Theorem 3.13.]{duckerts-antoine:2017}, which computes higher Galois groups, it is in particular shown that the so-called \textbf{Hopf formula for the $ (n+1) $-st homology} of $ A $ with respect to $ \mathscr{F} $
\begin{equation*}
	H_{n+1}(A,\mathscr{F}):=\frac{[P_n]\cap\bigcap_{0\leq i\leq n-1}\mathrm{Ker}(p_i)}{[P]_n},
\end{equation*}
where $ P $ is an $ n $-fold $ \mathscr{E} $-projective presentation of an object $ A $ in $ \mathscr{C} $, does not depend on the chosen presentation. Furthermore, $ H_{n+1}(A,\mathscr{F}) $ lies in $ \mathscr{F} $. In \cite{everaert.gran.vanderlinden:2008}, it is proven that the functors $ H_{n+1}(-,\mathscr{F}) $ coincide with the Barr-Beck left derived functors of the reflection $ \mathsf{F}:\mathscr{C}\to\mathscr{F} $ whenever $ \mathscr{C} $ is a semi-abelian category which is monadic over the category $ \mathrm{Set} $ of sets, $ \mathscr{F} $ is a Birkhoff subcategory of $ \mathscr{C} $, and $ \mathscr{E} $ and $ \mathscr{Z} $ are the classes of regular epimorphisms in $ \mathscr{C} $ and $ \mathscr{F} $, respectively. 

\begin{example}
	Let us again consider $ \mathscr{C} $ to be the category $ \mathrm{Grp} $ of groups and $ \mathscr{F} $ to be its subcategory $ \mathrm{Ab} $ of abelian groups as in Example~\ref{ex:doublecentralextensionsofgroups}. We know that $ \mathrm{Grp} $ has enough regular projective objects. Let $ p:P\to A $ be a regular projective presentation of any group $ A $. For example, $ P $ can be taken to be the free group on the underlying set of $ A $. Then the second Hopf formula $ H_2(A,\mathrm{Ab}) $ is given by 
	\begin{equation*}
		\frac{[P,P]\cap\mathrm{Ker}(p)}{[\mathrm{Ker}(p),P]}. 
	\end{equation*}
	It is shown in \cite{hopf:1942} that this expresion is exactly the second integral homology group of $ A $. If 
	\begin{equation*}
		\begin{tikzcd}
			P \arrow[d, "p_1"'] \arrow[r, "p_2"] &P_2 \arrow[d]\\
			P_1 \arrow[r] &A
		\end{tikzcd}
	\end{equation*}
	is a double regular projective presentation of $ A $, i.e., it is a double extension and $ P,P_1,P_2 $ are regular projective objects, the third Hopf formula $ H_3(A,\mathrm{Ab}) $ is given by 
	\begin{equation*}
		\frac{[P,P]\cap\mathrm{Ker}(p_1)\cap\mathrm{Ker}(p_2)}{[\mathrm{Ker}(p_1)\cap\mathrm{Ker}(p_2),P]\cdot[\mathrm{Ker}(p_1),\mathrm{Ker}(p_2)]}
	\end{equation*}
	and coincides with the third integral homology group of $ A $. More generally, it is proven in \cite{brown.ellis:1988} that the $ n $-th homology group of $ A $ is given by $ H_n(A,\mathrm{Ab}) $. 
\end{example}

The following result follows from \cite[Theorem 3.16.]{duckerts-antoine:2017}, see also \cite[4.4. Torsion theories]{duckerts-antoine:2017}. 
\begin{proposition}\label{prop:highercentralextensionsprotoadditive}
	Let $ \Gamma=(\mathscr{C},\mathscr{F},\mathsf{F},\mathsf{U},\mathscr{E},\mathscr{Z}) $ be a closed Galois structure, where $ \mathscr{C} $ is a pointed protomodular category and $ \mathscr{E} $ satisfies the conditions (E4), (E5) and (M). Furthermore, we assume that $ \mathsf{F} $ is a protoadditive functor. Let $ n\geq 1 $, $ A\in\mathrm{Ext}^n(\mathscr{C}) $, $ B\in\mathscr{C} $ and $ P\in\mathrm{Ext}^n(\mathscr{C}) $ be an $ n $-fold $ \mathscr{E} $-projective presentation of $ B $. 
	\begin{enumerate}
		\item The following conditions are equivalent: 
		\begin{enumerate}
			\item $ A\in\mathrm{NExt}^n(\mathscr{C}) $. 
			\item $ A\in\mathrm{CExt}^n(\mathscr{C}) $. 
			\item $ \bigcap_{0\leq i\leq n-1}\mathrm{Ker}(a_i)\in\mathscr{F} $. 
		\end{enumerate}
		\item We have that
		\begin{equation*}
			[A]_n=[\bigcap_{0\leq i\leq n-1}\mathrm{Ker}(a_i)]. 
		\end{equation*}
		\item The Hopf formula for the $ (n+1) $-st homology of $ B $ with respect to $ \mathscr{F} $ is given by 
		\begin{equation*}
			H_{n+1}(B,\mathscr{F}):=\frac{[P_n]\cap\bigcap_{0\leq i\leq n-1}\mathrm{Ker}(p_i)}{[\bigcap_{0\leq i\leq n-1}\mathrm{Ker}(p_i)]}. 
		\end{equation*}
	\end{enumerate}
\end{proposition}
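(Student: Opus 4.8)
The plan is to prove all three parts simultaneously by induction on $n$, using the argument behind Proposition~\ref{prop:centralextensionsprotoadditive} as the engine at each level of the tower $\Gamma_0,\Gamma_1,\dots$. The first thing I would record is that the argument establishing Proposition~\ref{prop:centralextensionsprotoadditive} rests only on pointedness, protomodularity, monadicity of the extensions and protoadditivity of the reflection, so its conclusion (normal $\Leftrightarrow$ central $\Leftrightarrow$ kernel in the torsion-free subcategory) is available for any of the closed Galois structures $\Gamma_k$ meeting those hypotheses. The base case $n=1$ is then exactly this statement for $\Gamma_0=\Gamma$: an extension $a_0\colon A_1\to A_0$ is normal iff central iff $\mathrm{Ker}(a_0)=\bigcap_{0\le i\le 0}\mathrm{Ker}(a_i)\in\mathscr{F}$. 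The crucial preparatory step that makes the induction run is the \emph{propagation of protoadditivity}: I would show that whenever $\mathsf{F}$ is protoadditive the induced reflection $\mathsf{F}_1\colon\mathrm{Ext}(\mathscr{C})\to\mathrm{NExt}(\mathscr{C})$ is again protoadditive, and hence by iteration each $\mathsf{F}_n$ is protoadditive. Since protomodularity and condition (M) are inherited by each $(\mathrm{Ext}^n(\mathscr{C}),\mathscr{E}^n)$ via \cite[Theorem 2.2.]{duckerts-antoine:2017} and \cite[Theorem 2.6.]{duckerts-antoine:2017}, all the hypotheses needed to re-run the Proposition~\ref{prop:centralextensionsprotoadditive} argument are then present at every level.

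For the inductive step of part (1) I would view an object $A\in\mathrm{Ext}^n(\mathscr{C})$ as a morphism $a\colon A^1\to A^0$ in $\mathrm{Ext}^{n-1}(\mathscr{C})$ through the isomorphism $\delta_{n-1}$, where $A^1=(A_{S\cup\{n-1\}})_{S\subseteq n-1}$ and $A^0=(A_S)_{S\subseteq n-1}$. Applying the base-case characterization to the Galois structure $\Gamma_{n-1}$ gives that $A$ is normal iff central iff $\mathrm{Ker}(a)\in\mathrm{NExt}^{n-1}(\mathscr{C})$. Computing this kernel pointwise in the functor category, $K:=\mathrm{Ker}(a)$ is the $(n-1)$-fold extension with $K_S=\mathrm{Ker}(a^{S\cup\{n-1\}}_S)$, and one checks that $K$ is again an $(n-1)$-fold extension. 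Its top object is $K_{n-1}=\mathrm{Ker}(a_{n-1})$, and its legs $k_i$ are the restrictions of $a_i$ to this subobject, so that $\mathrm{Ker}(k_i)=\mathrm{Ker}(a_i)\cap\mathrm{Ker}(a_{n-1})$ inside $A_n$. The induction hypothesis applied to $K$ then reads $K\in\mathrm{NExt}^{n-1}(\mathscr{C})$ iff
\begin{equation*}
	\bigcap_{0\le i\le n-2}\mathrm{Ker}(k_i)=\bigcap_{0\le i\le n-1}\mathrm{Ker}(a_i)\in\mathscr{F},
\end{equation*}
which closes the induction for part (1).

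For part (2) I would identify the normalization explicitly. By \cite[Theorem 2.15.]{duckerts-antoine:2017} the functor $[-]^n$ factors as $[A]^n=\iota^n([A]_n)$, so it suffices to compute the top corner of $[A]^n=\mathrm{Ker}(\eta^n_A)$. Unwinding the recursive definition of $\mathsf{F}_n$ and using protoadditivity, I would show that the unit $\eta^n_A\colon A\to\mathsf{F}_n(A)$ is an isomorphism away from the top corner and, at the top corner, is the cokernel of the inclusion of the torsion part $[\bigcap_{0\le i\le n-1}\mathrm{Ker}(a_i)]\hookrightarrow A_n$ of the total kernel; this is precisely the identification of the abstract invariant $[-]_n$ of \cite[Theorem 3.16.]{duckerts-antoine:2017} in the protoadditive case. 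Hence $[A]_n=[\bigcap_{0\le i\le n-1}\mathrm{Ker}(a_i)]$. Part (3) is then immediate: substituting $[P]_n=[\bigcap_{0\le i\le n-1}\mathrm{Ker}(p_i)]$ into the general Hopf formula of \cite[Theorem 3.13.]{duckerts-antoine:2017} for an $n$-fold $\mathscr{E}$-projective presentation $P$ of $B$ yields the stated expression for $H_{n+1}(B,\mathscr{F})$.

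The main obstacle I anticipate is the propagation of protoadditivity together with the explicit description of the normalization in part (2). Verifying that $\mathsf{F}_1$ is protoadditive requires unwinding how $\mathsf{F}_1$ is built from $\mathsf{F}$ on split short exact sequences of extensions and checking that protoadditivity of $\mathsf{F}$ is transmitted; and identifying the top-corner component of $\mathsf{F}_n$ with the cokernel of the torsion part of $\bigcap_i\mathrm{Ker}(a_i)$ is the step where the protoadditivity hypothesis genuinely does the work, collapsing the abstract Galois-theoretic description of \cite[Theorem 3.16.]{duckerts-antoine:2017} into the intersection-of-kernels form. The remaining points — that kernels of $n$-fold extensions are again $n$-fold extensions, and the subobject bookkeeping $\bigcap_{0\le i\le n-2}\mathrm{Ker}(k_i)=\bigcap_{0\le i\le n-1}\mathrm{Ker}(a_i)$ — are routine once the protomodular calculus of kernels is in place.
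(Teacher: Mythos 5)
The paper itself does not prove this proposition: it is stated as an immediate consequence of \cite[Theorem 3.16.]{duckerts-antoine:2017}, so your proposal is best read as an attempt to reconstruct the proof of that external theorem. Your inductive strategy --- run the argument of Proposition~\ref{prop:centralextensionsprotoadditive} at each level of the tower $\Gamma_0,\Gamma_1,\dots$ after showing that protoadditivity propagates to the derived reflections $\mathsf{F}_n$ --- is indeed the route taken in the literature (it is essentially how \cite{duckerts-antoine:2017} and \cite{everaert.gran:2014} proceed), and the bookkeeping you describe (kernels of $n$-fold extensions are again higher extensions, and $\bigcap_{0\le i\le n-2}\mathrm{Ker}(k_i)=\bigcap_{0\le i\le n-1}\mathrm{Ker}(a_i)$) is correct.

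As written, however, the proposal has a genuine gap: the two steps you yourself flag as ``the main obstacle'' --- that $\mathsf{F}_1$, and hence each $\mathsf{F}_n$, is again protoadditive, and that the unit $\eta^n_A$ is an isomorphism away from the top corner and there is the quotient by $[\bigcap_{0\le i\le n-1}\mathrm{Ker}(a_i)]$ --- are precisely the substance of the cited theorem, and you do not prove them; for part (2) in particular your text essentially restates the claim to be established. Note also that the two steps are not independent, so they cannot be cleanly separated into ``preparatory lemma'' and ``induction'': the natural way to see that $\mathsf{F}_1$ is protoadditive is to first prove part (2) for $n=1$ (so that $[A]_1=[\mathrm{Ker}(a_0)]$) and then check that $[-]=\mathrm{Ker}(\eta_{(-)})$ preserves split short exact sequences whenever $\mathsf{F}$ does (protoadditivity makes $\mathsf{F}(X)\to\mathsf{F}(Y)$ a kernel, whence $[X]=X\cap[Y]$, and protomodularity finishes the argument). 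The induction therefore has to interleave parts (1) and (2) with the propagation of protoadditivity rather than treat the latter as a standalone first step. A minor further point: Proposition~\ref{prop:centralextensionsprotoadditive} is stated for an \emph{admissible} Galois structure, whereas here $\Gamma$ is only assumed closed, so before invoking it at every level you should either observe that its proof does not use admissibility or restate it in the closed setting of Section~\ref{subsec:highercentralextensions}.
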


\subsection{Higher central extensions and generalized Hopf formulae in $ \mathscr{A}^\mathbb{T} $ with respect to $ \mathscr{F} $}\label{subsec:highercentralextensionsinA^T}

After we studied the central extensions in $ \mathscr{A}^\mathbb{T} $ with respect to $ \mathscr{F} $ in Section~\ref{subsec:centralextensionsinA^T}, we apply the theory recalled in the previous section to obtain a characterization of the higher central extensions in $ \mathscr{A}^\mathbb{T} $. Indeed, $ \Gamma=(\mathscr{A}^\mathbb{T},\mathscr{F},\mathsf{F},\mathsf{U},\mathscr{E},\mathscr{Z}) $ is a closed Galois structure, $ \mathscr{A}^\mathbb{T} $ is pointed protomodular, $ \mathscr{E} $ satisfies the conditions (E4), (E5) and (M) since $ \mathscr{A} $ is assumed to be quasi-abelian, and the reflection $ \mathsf{F} $ is protoadditive as it is a left-adjoint between additive categories. 

Let $ n\geq 1 $ and $ A\in\mathrm{Ext}^n(\mathscr{A}^\mathbb{T}) $. As explained in Section~\ref{subsec:highercentralextensions}, we denote, for all $ 0\leq i\leq n-1  $, by $ a_i $ the image of the morphism $ n\setminus\{i\}\subseteq n $ seeing $ A $ as an object in $ (\mathscr{A}^\mathbb{T})^{\mathcal{P}(n)^\mathrm{op}} $. In addition to Proposition~\ref{prop:centralextensions}, Proposition~\ref{prop:highercentralextensionsprotoadditive} implies the following: 

\begin{proposition}\label{prop:highercentralextensions}
	Let $ n\geq 1 $ and $ \mathscr{A} $ be a quasi-abelian category. The following conditions are equivalent: 
	\begin{enumerate}
		\item $ A\in\mathrm{NExt}^n(\mathscr{A}^\mathbb{T}) $. 
		\item $ A\in\mathrm{CExt}^n(\mathscr{A}^\mathbb{T}) $.
		\item $ \bigcap_{0\leq i\leq n-1}\mathrm{Ker}(a_i)\in\mathscr{F} $.
		\item $ \{(a_i)_T\}_{0\leq i\leq n-1} $ are jointly monomorphic for all $ T\notin\mathbb{S} $. 
	\end{enumerate}
\end{proposition}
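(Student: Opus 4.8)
The plan is to obtain the equivalence of the first three conditions for free from Proposition~\ref{prop:highercentralextensionsprotoadditive}, and then to reduce the new condition~(4) to condition~(3) by a pointwise argument in the functor category. First I would check that the hypotheses of Proposition~\ref{prop:highercentralextensionsprotoadditive} hold in our situation; but this is precisely what the paragraph preceding the statement records, namely that $\Gamma=(\mathscr{A}^\mathbb{T},\mathscr{F},\mathsf{F},\mathsf{U},\mathscr{E},\mathscr{Z})$ is a closed Galois structure with $\mathscr{A}^\mathbb{T}$ pointed protomodular, $\mathscr{E}$ satisfying (E4), (E5) and (M), and $\mathsf{F}$ protoadditive (being a left adjoint between additive categories). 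Part~1 of Proposition~\ref{prop:highercentralextensionsprotoadditive} then yields the chain of equivalences $(1)\Leftrightarrow(2)\Leftrightarrow(3)$ at once.

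It remains to prove $(3)\Leftrightarrow(4)$. The crucial observation is that all finite limits in $\mathscr{A}^\mathbb{T}$ are computed pointwise, so each evaluation functor $\mathrm{ev}_T\colon\mathscr{A}^\mathbb{T}\to\mathscr{A}$ preserves kernels and finite intersections. Hence, writing $K:=\bigcap_{0\leq i\leq n-1}\mathrm{Ker}(a_i)$, which exists because $\mathscr{A}$, and therefore $\mathscr{A}^\mathbb{T}$, is finitely complete, one has
\[
	K(T)=\bigcap_{0\leq i\leq n-1}\mathrm{Ker}\big((a_i)_T\big)\qquad\text{for every }T\in\mathbb{T}.
\]
By the very definition of $\mathscr{F}$, the subobject $K$ of $A_n$ lies in $\mathscr{F}$ if and only if $K(T)=0$ for all $T\notin\mathbb{S}$, that is, if and only if $\bigcap_{i}\mathrm{Ker}\big((a_i)_T\big)=0$ for all $T\notin\mathbb{S}$.

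To finish I would invoke the elementary fact that, in the additive category $\mathscr{A}$, a finite family of morphisms $\{(a_i)_T\}_{i}$ with common domain is jointly monomorphic precisely when the intersection of their kernels vanishes: a morphism $u$ factors through $\bigcap_i\mathrm{Ker}\big((a_i)_T\big)$ exactly when $(a_i)_T u=0$ for all $i$, and additivity reduces the joint-monomorphism condition (that $(a_i)_T g=(a_i)_T h$ for all $i$ implies $g=h$) to the implication that $(a_i)_T u=0$ for all $i$ forces $u=0$. Combining this with the previous paragraph gives $(3)\Leftrightarrow(4)$. I do not expect any genuine obstacle here: the entire content beyond Proposition~\ref{prop:highercentralextensionsprotoadditive} is the interchange of the intersection with evaluation and the translation between joint monomorphicity and the vanishing of a kernel, both of which rest only on the pointwise computation of limits in $\mathscr{A}^\mathbb{T}$ and on the additive structure of $\mathscr{A}$, and not on any exactness properties.
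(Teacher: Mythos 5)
Your proposal is correct and follows essentially the same route as the paper: the equivalence of (1)--(3) is obtained directly from Proposition~\ref{prop:highercentralextensionsprotoadditive} (whose hypotheses are verified in the paragraph preceding the statement), and (3)$\Leftrightarrow$(4) is the pointwise computation of $\bigcap_i\mathrm{Ker}(a_i)$ together with the translation between vanishing of the intersection of kernels and joint monomorphicity. Your explicit justification of that last translation via additivity is a slightly more detailed version of what the paper leaves implicit.
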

\begin{proof}
	We note that $ \bigcap_{0\leq i\leq n-1}\mathrm{Ker}(a_i)\in\mathscr{F} $ is equivalent to $ \bigcap_{0\leq i\leq n-1} \mathrm{Ker}((a_i)_T)=0 $ for all $ T\notin\mathbb{S} $. This means that $ \{(a_i)_T\}_{0\leq i\leq n-1} $ are jointly monomorphic for all $ T\notin\mathbb{S} $. 
\end{proof}

We continue the study of the Examples~\ref{ex:arrowcategory} and \ref{ex:doublearrowcategory}. 

\begin{example}
	As in Example~\ref{ex:arrowcategory}, let us consider $ \mathbb{T} $ to be walking arrow category $ 1\to 0 $ and $ \mathbb{S} $ to be the full subcategory of $ \mathbb{T} $ which contains only $ 0 $ as an object. We saw that an extension $ (f_0,f_1) $ in $ \mathrm{Arr}(\mathscr{A}) $ from $ a $ to $ b $ as depicted in the diagram
	\begin{equation*}
		\begin{tikzcd}
			A_1 \arrow[d, "a"'] \arrow[r, "f_1"] &B_1 \arrow[d,"b"]\\
			A_0 \arrow[r,"f_0"'] &B_0
		\end{tikzcd}
	\end{equation*}
	is central if and only if $ f_1 $ is an isomorphism.
	We note that, for an object $ a:A_1\to A_0 $ of $ \mathrm{Arr}(\mathscr{A}) $, $ [a] $ is given by the epimorphism part $ \varphi\circ\mathrm{cok}(\mathrm{ker}(a)):A_1\to\mathrm{Ker}(\mathrm{cok}(a)) $ of the (epimorphism, normal monomorphism)-factorization of $ a $, see also the beginning of Section~\ref{subsec:torsiontheoryTF}. We set $ \mathrm{Coim}(a):=\mathrm{Ker}(\mathrm{cok}(a)) $.  The centralization of the extension $ (f_0,f_1) $ is given by the induced morphism in the commutative diagram
	\begin{equation*}
		\begin{tikzcd}
			\mathrm{Ker}(f_1) \arrow[dd] \arrow[r, equal] &\mathrm{Ker}(f_1) \arrow[dd, "a|_{\mathrm{Ker}(f_1)}"'] \arrow[r] &A_1 \arrow[rr, near start, "f_1"] \arrow[dr] \arrow[dd, "a"'] &&B_1 \arrow[dd, "b"]\\
			&&&\frac{A_1}{\mathrm{Ker}(f_1)} \arrow[dd] \arrow[ru, dotted]&\\
			\mathrm{Coim}(a|_{\mathrm{Ker}(f_1)}) \arrow[r] &\mathrm{Ker}(f_0) \arrow[r] &A_0 \arrow[rd] \arrow[rr, near start, "f_0"'] && B_0.\\
			&&&\frac{A_0}{\mathrm{Coim}(a|_{\mathrm{Ker}(f_1)})} \arrow[ru, dotted]
		\end{tikzcd}
	\end{equation*}
	If $ (f_0,f_1) $ yields a regular projective presentation for $ (B_0,B_1) $, then the Hopf formula for the second homology is given by 
	\begin{equation*}
		H_2(b,\mathscr{A})=\frac{\mathrm{Coim}(a)\cap\mathrm{Ker}(f_0)}{\mathrm{Coim}(a|_{\mathrm{Ker}(f_1)})}. 
	\end{equation*}
	A double extension as the outer part of the diagram
	\begin{equation*}
		\begin{tikzcd}
			A_1 \arrow[rr, "g_1"] \arrow[dd, "f_1"'] \arrow[dr] \arrow[dddr, "a"'] &&C_1 \arrow[dd, "h_1"] \arrow[dddr, "c"]&\\
			&\frac{A}{\mathrm{Ker}(f_1)\cap\mathrm{Ker}(g_1)} \arrow[dddr, dotted] \arrow[ru, dotted] \arrow[ld, dotted]&&\\
			B_1 \arrow[dddr, "b"'] \arrow[rr, "i_1"'] && D_1 \arrow[dddr, "d"] &\\
			& A_0 \arrow[dr] \arrow[rr, "g_0"] \arrow[dd, "f_0"'] &&C_0 \arrow[dd, "h_0"]\\
			&&\frac{A_0}{\mathrm{Coim}(a|_{\mathrm{Ker}(f_1)\cap\mathrm{Ker}(g_1)})} \arrow[dl, dotted] \arrow[ru, dotted]&\\
			&B_0 \arrow[rr, "i_0"'] &&D_0
		\end{tikzcd}
	\end{equation*}
	is central if and only if $ f_1 $ and $ g_1 $ are jointly monomorphic. Its centralization is given via the induced morphisms depicted in the above diagram. If the double extension yields a double regular projective presentation for $ d:D_1\to D_0 $, then the Hopf formula for the third homology is given by 
	\begin{equation*}
		H_3(d,\mathscr{A})=\frac{\mathrm{Coim}(a)\cap\mathrm{Ker}(f_1)\cap\mathrm{Ker}(g_1)}{\mathrm{Coim}(a|_{\mathrm{Ker}(f_1)\cap\mathrm{Ker}(g_1)})}. 
	\end{equation*}
	It is clear how this formula generalizes to arbitrary $ n\geq2 $. 
\end{example}

\begin{example}
	Let $ \mathbb{T} $ and $ \mathbb{S} $ be as in Example~\ref{ex:doublearrowcategory}. We saw that an extension $ (f^0_0, f^0_1, f^1_0, f^1_1) $ in $ \mathrm{Arr}^2(\mathscr{A}) $ from $ \mathbb{A} $ to $ \mathbb{B} $ as depicted in the diagram 
	 \begin{equation*}
	 	\begin{tikzcd}
	 		A^1_1 \arrow[rr, near start, "f^1_1"] \arrow[dd,near start, "a_1"'] \arrow[dr, "a^1"] && B^1_1 \arrow[dr, "b^1"] \arrow[dd, near start, "b_1"] &\\
	 		&A^1_0 \arrow[dd, near end, "a_0"'] \arrow[rr, near start, "f^1_0"] &&B^1_0 \arrow[dd, near end, "b_0"]\\
	 		A^0_1 \arrow[rr, near start, "f^0_1"] \arrow[dr, "a^0"'] && B^0_1 \arrow[dr, "b^0"]\\
	 		&A^0_0 \arrow[rr, near start, "f^0_0"'] && B^0_0
	 	\end{tikzcd}
	 \end{equation*}
 	is central if and only if $ f^0_1 $ is an isomorphism. We observe that $ [\mathbb{A}] $ is given by 
 	\begin{equation*}
 		\begin{tikzcd}
 			0 \arrow[d] \arrow[rr, equal] &&0 \arrow[d]\\
 			A^0_1 \arrow[rr, "\varphi\circ\mathrm{cok}(\mathrm{ker}(a^0))"'] &&\mathrm{Coim}(a^0). 
 		\end{tikzcd}
 	\end{equation*} 
 	Hence the centralization of $ (f^0_0, f^0_1, f^1_0, f^1_1) $ is given by the following extension: 
 	\begin{equation*}
 		\begin{tikzcd}
 			A^1_1 \arrow[rr, near start] \arrow[dd,near start] \arrow[dr] && B^1_1 \arrow[dr] \arrow[dd, near start] &\\
 			&A^1_0 \arrow[dd, near end] \arrow[rr, near start] &&B^1_0 \arrow[dd, near end]\\
 			\frac{A^0_1}{\mathrm{Ker}(f^0_1)} \arrow[rr, near start] \arrow[dr] && B^0_1 \arrow[dr]\\
 			&\frac{A^0_0}{\mathrm{Coim}(a^0|_{\mathrm{Ker}(f^0_1)})} \arrow[rr, near start] && B^0_0.
 		\end{tikzcd}
 	\end{equation*}
 	Moreover, if $ (f^0_0,f^0_1,f^1_0,f^1_1) $ is a regular projective presentation of $ \mathbb{B} $, the Hopf formula for the second homology is given by 
 	\begin{equation*}
 		\begin{tikzcd}
 			H_2(\mathbb{B},2\textrm{-Arr}(\mathscr{A}))=\Bigl(0 \arrow[r, equal] &0 \arrow[r] &\frac{\mathrm{Coim}(a^0)\cap\mathrm{Ker}(f^0_0)}{\mathrm{Coim}(a^0|_{\mathrm{Ker}(f^0_1)})}\Bigr). 
 		\end{tikzcd}
 	\end{equation*}
 	It is easy to see how this generalizes for higher central extensions. 
\end{example}

In \cite[Theorem 4.11]{everaert.gran:2014}, it is shown that a torsion theory $ (\mathscr{T},\mathscr{F}) $ in a homological category $ \mathscr{C} $, where $ \mathsf{F} $ is protoadditive and the composite $ \mathrm{ker}(f)\varepsilon_{\mathrm{Ker}(f)}:\mathsf{T}(\mathrm{Ker}(f))\to A $ is a normal monomorphism for any morphism $ f:A\to B $ in $ \mathscr{C} $, induces, for any $ n\geq1 $, a torsion theory $ (\mathscr{T}_n,\mathscr{F}_n) $ in the category $ \mathrm{Ext}^n(\mathscr{C}) $ of $ n $-fold extensions with respect to the class of extensions in $ \mathscr{C} $ consisting of all normal epimorphisms in $ \mathscr{C} $. The consequences of this result in our setting will be studied in a future article.


\begin{thebibliography}{10}
	
\bibitem{barr.grillet.vanosdol:1971}
\textsc{M. Barr, P.A. Grillet and D.H. van Osdol}, Exact categories and categories of sheaves, \textit{Lect. Notes Math., vol. 236, Springer-Verlag} (1971). 	
	
\bibitem{bonet.dierolf:2005}
\textsc{J. Bonet and S. Dierolf}, The pullback for bornological and ultrabornological spaces, \textit{Note Mat.} \textbf{25} 1 (2005/06), 63--67.

\bibitem{borceux:1994}
\textsc{F. Borceux}, Handbook of categorical algebra, Volume II: Categories and structures, \textit{Encyclopedia Math. Appl., vol. 51, Cambridge Univ. Press} (1994). 

\bibitem{bourn:1991}
\textsc{D. Bourn}, Normalization equivalence, kernel equivalence and affine categories, \textit{Lect. Notes Math., vol. 1488, Springer-Verlag} (1971), 43--62. 

\bibitem{brown.ellis:1988}
\textsc{R. Brown and G.J. Ellis}, Hopf formulae for the higher homology of a group, \textit{Bull. London Math. Soc.} \textbf{20} (1988), 124--128.

\bibitem{carboni.kelly.pedicchio:1993}
\textsc{A. Carboni, G.M. Kelly and M.C. Pedicchio}, Some remarks on Maltsev and Goursat categories, \textit{Appl. Categorical Struct.} \textbf{1} (1993), 385--421.

\bibitem{carboni.lambek.pedicchio:1990}
\textsc{A. Carboni, J. Lambek and M. C. Pedicchio}, Diagram chasing in Mal'cev categories, \textit{J. Pure Appl. Algebra} \textbf{69} (1990), 271--284.

\bibitem{cassidy.hebert.kelly:1985}
\textsc{C. Cassidy, M. H\'{e}bert, G.M. Kelly}, Reflective subcategories, localizations and factorization systems, \textit{J. Austral. Math. Soc. (Series A)} \textbf{38} (19985), 287--329. 

\bibitem{duckerts-antoine:2017}
\textsc{M. Duckerts-Antoine}, Fundamental group functors in descent-exact homological categories, \textit{Adv. in Mathematics}, \textbf{310} (2017), 64--120.  

\bibitem{everaert:2010}
\textsc{T. Everaert}, Higher central extensions and Hopf formulae, \textit{J. Algebra} \textbf{324} (2010), 1771--1789.

\bibitem{everaert:2014}
\textsc{T. Everaert}, Higher central extensions in Mal'tsev categories, \textit{Appl. Categor. Struct.} \textbf{22} (2014), 961--979. 

\bibitem{everaert.goedecke.vanderlinden:2012}
\textsc{T. Everaert, J. Goedecke and T. Van der Linden}, Resolutions, higher extensions and the relative Mal'tsev axiom, \textit{J. Algebra} \textbf{371} (2012), 132--155. 

\bibitem{everaert.gran:2010}
\textsc{T. Everaert and M. Gran}, Homology of n-fold groupoids, \textit{Theory Appl. Categ.} \textbf{23} (2010), 22--41.

\bibitem{everaert.gran:2014}
\textsc{T. Everaert and M. Gran}, Protoadditive functors, derived torsion theories and homology, \textit{J. Pure Appl. Algebra} \textbf{219} 8 (2014), 3629--3676.

\bibitem{everaert.gran.vanderlinden:2008}
\textsc{T. Everaert, M. Gran and T. Van der Linden}, Higher Hopf formulae for homology via Galois theory, \textit{Adv. in Mathematics} \textbf{217} (2008), 2231--2267.

\bibitem{froehlich:1963}
\textsc{A. Fröhlich}, Baer-invariants of algebras, \textit{Trans. Amer. Math. Soc.} \textbf{109} (1963), 221--244.

\bibitem{furtado-coelho:1972}
\textsc{J. Furtado-Coelho}, Varieties of $ \Omega $-groups and associated functors, Ph.D. Thesis, King's College, University of London (1972).

\bibitem{gran:1999}
\textsc{M. Gran}, Internal categories in Mal'cev categories, \textit{J. Pure Appl. Algebra} \textbf{143} (1999), 221--229.

\bibitem{gran:2001}
\textsc{M. Gran}, Central extensions and internal groupoids in Maltsev categories, \textit{J. Pure Appl. Algebra} \textbf{155} (2001), 139--166.

\bibitem{gran.janelidze:2009}
\textsc{M. Gran and G. Janelidze}, Covering morphisms and normal extensions in Galois structures associated with torsion theories, \textit{Cah. Topol. G\'eom. Diff\'er. Cat\'eg.} \textbf{50} 3 (2009), 171--188.

\bibitem{gran.ngahangaha:2013}
\textsc{M. Gran and O. Ngaha Ngaha}, Effective descent morphisms in star-regular categories, \textit{Homol. Homotopy Appl.} \textbf{15} 2 (2013), 127--144.

\bibitem{gran.rossi:2007}
\textsc{M. Gran and V. Rossi}, Torsion theories and coverings of topological groups, \textit{J. Pure Appl. Algebra} \textbf{208} (2007), 135-–151. 

\bibitem{gruson:1966}
\textsc{L. Gruson}, Compl\'{e}tion ab\'{e}lienne, \textit{Bull. Sci. Math.} \textbf{90} (1966), 17--40. 
 
\bibitem{hopf:1942}
\textsc{H. Hopf}, Fundamentalgruppe und zweite Bettische Gruppe, \textit{Comment. Math. Helv.} \textbf{14} (1942), 257--309.

\bibitem{janelidze:1989}
\textsc{G. Janelidze}, The fundamental theorem of Galois theory, \textit{Math. USSR Sbornik} \textbf{64} 2 (1989), 359--374.

\bibitem{janelidze:1990}
\textsc{G. Janelidze}, Pure Galois theory in categories, \textit{J. Algebra} \textbf{132} (1990), 270--286.

\bibitem{janelidze:1991}
\textsc{G. Janelidze}, Precategories and Galois theory, \textit{Lecture Notes in Math.} \textbf{1488}, Springer (1991), 157--173.

\bibitem{janelidze:2003}
\textsc{G. Janelidze}, Internal crossed modules, \textit{Georgian Math. J.} \textbf{10} (2003), 99--114.

\bibitem{janelidze:2010}
\textsc{Z. Janelidze}, The pointed subobject functor, 3×3 lemmas, and subtractivity of spans, \textit{Theory Appl. Categ.} \textbf{23} (2010), 221--242. 

\bibitem{janelidze.kelly:1994}
\textsc{G. Janelidze and G.M. Kelly}, Galois theory and a general notion of central extension, \textit{J. Pure Appl. Algebra} \textbf{2} (1994), 135--161.

\bibitem{janelidze.kelly:1997}
\textsc{G. Janelidze and G.M. Kelly}, The reflectiveness of covering morphisms in algebra and geometry, \textit{Theory Apppl. Categ.} \textbf{3} 6 (1997), 132--159. 

\bibitem{janelidze.kelly:2000}
\textsc{G. Janelidze and G.M. Kelly}, Central extensions in Mal'tsev varieties, \textit{Theory Appl. Categ.} \textbf{7} 10 (2000), 219--226.

\bibitem{janelidze.marki.tholen:2002}
\textsc{G. Janelidze, L. Márki and W. Tholen}, Semi-abelian categories, \textit{J. Pure Appl. Algebra} \textbf{168} (2002), 367--386.

\bibitem{janelidze.tholen:1994}
\textsc{G. Janelidze and W. Tholen}, Facets of descent, I, \textit{ Appl. Categ. Structures} \textbf{2} (1994), 245--281.

\bibitem{janelidze.tholen:2007}
\textsc{G. Janelidze and W. Tholen}, Characterization of torsion theories in general categories, \textit{Contemp. Math.} \textbf{431} (2007), 249--256.

\bibitem{kopylov.wegner:2012}
\textsc{Y. Kopylov and S.-A. Wegner}, On the notion of a semi-abelian category in the sense of Palamodov, \textit{Appl. Categ. Structures} \textbf{20} (2012), 531--541.

\bibitem{lue:1967}
\textsc{A.S.-T. Lue}, Baer-invariants and extensions relative to a variety, \textit{Proc. Cambridge Philos. Soc.} \textbf{63} (1967), 569--578.

\bibitem{maclane:1971}
\textsc{S. Mac Lane}, Categories for the working mathematician, \textit{Graduate Texts in Math.} \textbf{6}, Springer (1971). 

\bibitem{palamodov:1971}
\textsc{V.P. Palamodov}, Homological methods in the theory of locally convex spaces, \textit{Uspekhi Mat. Nauk. } \textbf{26} (1971), 3--65 (Russian). 

\bibitem{raikov:1969}
\textsc{D.A. Ra\v{i}kov}, Semiabelian categories, \textit{Soviet Math. Doklady} \textbf{10} (1969), 1242--1245.

\bibitem{rosicky.tholen:2007}
\textsc{J. Rosick\'y and W. Tholen}, Factorization, fibraton and torsion, \textit{J. Homotopy Relat. Struct.} \textbf{2} 2 (2007), 295--314.

\bibitem{rump:2001}
\textsc{W. Rump}, Almost abelian categories, \textit{Cah. Topol. G\'eom. Diff\'er. Cat\'eg.} \textbf{42} 3 (2001), 163--225.

\bibitem{rump:2008}
\textsc{W. Rump}, A counterexample to Raikov's conjecture, \textit{Bull. London Math. Soc.} \textbf{40} 6 (2008), 985--994.

\bibitem{schneiders:1999}
\textsc{J.-P. Schneiders}, Quasi-abelian categories and sheaves, \textit{M\'em. Soc. Math. Fr. Nouv. S\'{e}r.} \textbf{76} (1999).

\bibitem{yoneda:1960}
\textsc{N. Yoneda}, On Ext and exact sequences, \textit{J. Fac. Sci. Univ. Tokyo Sect.} \textbf{18} (1960), 507--576. 

\end{thebibliography}
\end{document}